\documentclass[12pt, reqno]{amsart}
\usepackage[margin=3.6cm]{geometry}

\usepackage[utf8]{inputenc}
\usepackage{amsmath, amsthm,amssymb,color}
\usepackage{amsfonts,dsfont,enumitem}

\newtheorem{Theorem}{Theorem}
\newtheorem{prop}{Proposition}[section]
\newtheorem{cor}[prop]{Corollary}
\newtheorem{Lemma}[prop]{Lemma}

\theoremstyle{definition}

\newtheorem{rem}{Remark}[section]
\newtheorem{defn}[prop]{Definition} 
\newtheorem{assu}{Assumptions}[section]

\newcommand{\C}{\mathbb{C}}

\newcommand{\Mcal}{\mathcal{M}}

\newcommand{\Vcal}{\mathcal{V}}
\newcommand{\Wcal}{\mathcal{W}}
\newcommand{\Acal}{\mathcal{A}}
\newcommand{\bfc}{\mathbf{c}}

\newcommand{\deq}{\overset{\mathrm{def}}{=}}
\newcommand{\rmax}{\mathrm{max}}
\newcommand{\rmin}{\mathrm{min}}
\newcommand{\Emin}{E_\rmin}
\newcommand{\Emax}{E_\rmax}
\newcommand{\Fcal}{\mathcal{F}}
\newcommand{\tilh}{{\tilde h}}
\newcommand{\Scal}{\mathcal{S}}
\newcommand{\Lcal}{\mathcal{L}}

\newcommand{\tbet}{{\tilde{\beta}}}
\newcommand{\Op}{{\mathrm{Op}}}

\newcommand{\Z}{\mathbb{Z}}

\newcommand{\N}{\mathbb{N}}
\newcommand{\R}{\mathbb{R}}
\newcommand{\eps}{\varepsilon}
\newcommand{\dis}{\displaystyle}
\newcommand{\alp}{\alpha}

\newcommand{\un}{\mathds{1}}

\renewcommand{\leq}{\leqslant}
\renewcommand{\geq}{\geqslant}
\renewcommand{\Im}{\mathrm{Im}}

\newcommand{\defeq}{\stackrel{\rm{def}}{=}}
\newcommand{\spec}{\operatorname{spec}}
\newcommand{\Kcal}{\mathcal{K}}
\newcommand{\tv}[2]{\underset{{#1}\rightarrow {#2}}{\longrightarrow}}

  \title{Eigenvalue spacing for 1D singular Schr\"odinger operators }
\author[L. Hillairet]
{Luc Hillairet}
\email{luc.hillairet@math.univ-orleans.fr}
\address{Institut Denis Poisson, Universit\'e d'Orl\'eans,\\
Orl\'eans, France}
  
  \author[J.L. Marzuola]
{Jeremy L. Marzuola}
\email{marzuola@math.unc.edu}

\address{Mathematics Department, University of North Carolina \\
CB\#3255, Phillips Hall, Chapel Hill, NC USA}
 
  \begin{document}
  
  \begin{abstract}
    The aim of this paper is to provide uniform estimates for the eigenvalue spacings of
    one-dimensional semiclassical Schr\"odinger operators with singular potentials on the half-line.  We introduce a new development of semiclassical measures related to families of Schr\"odinger operators that provides a means of establishing uniform non-concentration estimates within that class of operators.  This dramatically simplifies analysis that would typically require detailed WKB expansions near the turning point, near the singular point and several gluing type results to connect various regions in the domain. 
\end{abstract}
\maketitle

\section{Introduction}

We consider a (self-adjoint) one dimensional semiclassical Schr\"odinger operator 
\[
P_h u \,=\, -h^2 u''\,+\, V(x) u
\] 
that is defined on the half-line $I=[0,+\infty).$ The potential $V$ is defined by
$x\mapsto x_+^\gamma W(x)$ for some $\gamma >0$ and a smooth, positive $W$. 
We will be interested in the eigenvalue equation 
\begin{equation}\label{eq:eigeq}
P_h u_h=E_h u_h,
\end{equation} 
for an energy $E_h$ in a certain regime that is a, possibly $h$-dependent, compact interval $K_h\subset \R$ that
we call the energy window. If the spectrum of $P_h$ is discrete in $K_h$ we define, for $E$ in $\spec P_h$, 
\[
  d_h(E) \defeq \inf \{ |E-\tilde{E}|,~ \tilde{E}\in \spec P_h, \tilde{E} \neq E\},  
\]
and we aim at giving lower bounds on $d_h(E)$ as uniform as possible.

Studying Schr\"odinger operators is a standard problem in spectral theory and many results on eigenvalues
and eigenfunctions can be extracted from the literature on Sturm-Liouville problems and semiclassical analysis
(Titchmarsch \cite{titchmarsh1946eigenfunction}, Olver \cite{olver}, H\"ormander \cite{Hor-v1,Hor-v2,Hormander3,Hormander4},
Maslov \cite{Maslov72}, 
Helffer-Robert \cite{helffer1983calcul}, Dimassi-Sj\"ostrand \cite{DiSj}, Zworski \cite{zworski2012semiclassical}).

In particular, Bohr-Sommerfeld rules for smooth potentials in the semiclassical literature imply that, for a sequence of
eigenvalues $(E_h)_{h>0}$ that converges to a non-critical energy $E_0$ with a connected energy surface,
then the spacing is of order $h$ (see Section
10.5 in \cite{Bender_Orszag78} or \cite{Cdv_BS05, Yafaev11} for instance).
In most cases, semiclassical techniques allow one to work in any dimension but, often, only for smooth potentials.

Singular potentials have also been studied (see among others \cite{LaiRobert79, Berry82, Chr_AIF_15}).
Often, the "bottom-of-the well" regime is considered, i.e. when $E_h$ goes to $0$ at a certain rate.
The latter rate can be obtained by a scaling argument by deciding for which power $\alpha$ the change of variables $x\leftarrow h^\alpha x$
transforms the problem into a non-semiclassical second order differential equation. It can then be proved that the $k$-th eigenvalue of $P_h$ behaves like 
$a_kh^{\frac{2\gamma}{\gamma +2}}$ from which we infer that the spacing in this regime is also of order $h^{\frac{2\gamma}{\gamma+2}}$
(see \cite{friedlander2009spectrum}, and also \cite{Simon_lowlying_83} for a much more complete study of the bottom of the well for quadratic potentials, or \cite{Bony_Popoff19} for even more degenerate situations). We also advertise
the recent paper \cite{GW} that lays the foundations for a systematic semiclassical study of a class of singular potentials.

The intermediate regime, which is neither the non-critical energies nor the bottom of the well is known in the semiclassical literature
as \textit{semi-excited states} and has been initiated by Sj\"ostrand \cite{Sjostrand_semi92}.  

Our main result is stated as follows and can be seen as an estimate unifying all the preceding regimes.

\begin{Theorem}
\label{thm:main}
    Assume that $\gamma>0$ and $W$ is smooth and positive on $[0,+\infty)$. Let $V=x^\gamma W$ and $P_h$ the Dirichlet or Neumann realization of
    $-h^2u''+V$ on $[0,+\infty)$. If $\liminf_{x\rightarrow +\infty} V(x) >0$, there exist $M, h_0, \bfc>0$ such that
    \begin{enumerate}
    \item For all $h\leq h_0$, $\spec P_h \cap [0,M]$ is purely discrete,
    \item For any $h\leq h_0$ and any $E$ in $\spec P_h \cap [0,M]$,
      \[
d_h(E) \,\geq \, \bfc h\cdot E^{\frac{\gamma-2}{2\gamma}}.
\]
\end{enumerate}
\end{Theorem}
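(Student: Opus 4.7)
My plan is to argue by contradiction, rescaling to absorb the $E$-dependence into an effective semiclassical parameter, and then to invoke the paper's family-semiclassical-measure framework to derive a non-concentration estimate that is incompatible with $L^2$-orthogonality.

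\textbf{Rescaling.} If the bound fails, extract $h_n \to 0$ and $E_n \ne \tilde E_n$ in $\spec P_{h_n}\cap [0,M]$ with $|E_n - \tilde E_n| = o(h_n E_n^{(\gamma-2)/(2\gamma)})$. Along a subsequence, $E_n \to E_\infty \in [0,M]$. If $E_\infty > 0$, turning and singular points are separated and standard non-critical Bohr--Sommerfeld yields spacing of order $h_n$, already contradicting the hypothesis. Assume $E_n \to 0$ and apply the unitary rescaling $v(y) = E_n^{1/(2\gamma)} u(E_n^{1/\gamma}y)$, which conjugates $E_n^{-1} P_{h_n}$ to $\hat P_n \defeq -\tilde h_n^2 \partial_y^2 + y^\gamma W_n(y)$ with $\tilde h_n = h_n E_n^{-(\gamma+2)/(2\gamma)}$ and $W_n(y) = W(E_n^{1/\gamma}y)$. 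Since the ground state of $P_{h_n}$ is of order $h_n^{2\gamma/(\gamma+2)}$, $\tilde h_n$ stays bounded; the hypothesis becomes $\hat P_n v_n = v_n$, $\hat P_n\tilde v_n = (1+\epsilon_n)\tilde v_n$ with $\epsilon_n = o(\tilde h_n)$, and the normalized $v_n,\tilde v_n$ remain orthogonal in $L^2(\R_+)$.

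\textbf{Family measures and non-concentration.} The operators $\hat P_n$ form a family parameterized by $\tilde h_n \in (0,C]$ and potentials $W_n \to W(0)$ locally uniformly. If $\tilde h_n$ stays bounded below, $\hat P_n$ converges in strong resolvent sense to a limit operator with simple discrete spectrum, so consecutive eigenvalues are bounded apart and the failure cannot persist. In the semiclassical regime $\tilde h_n \to 0$, extract family-semiclassical measures $\mu,\tilde\mu$ of $(v_n),(\tilde v_n)$. They are probability measures on the limit energy curve $\{(y,\eta):\eta^2 + y^\gamma W(0) = 1,\, y \ge 0\}$, which is a single closed (reflected) Hamiltonian orbit, and invariance under the limit flow forces $\mu = \tilde\mu = \mu_0$, the unique invariant probability on it. The core technical input from the paper's framework is \emph{uniform} non-concentration of $\mu_0$ at the singular point $y=0$ and at the turning point $y_*$ determined by $y_*^\gamma W(0) = 1$, quantified along the family.

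\textbf{Wronskian contradiction.} A direct computation gives
\[
\tfrac{d}{dy} W(v_n,\tilde v_n)(y) \,=\, -\tilde h_n^{-2}\,\epsilon_n\, v_n(y)\tilde v_n(y),
\]
so, using the boundary value $W(v_n,\tilde v_n)(0) = 0$ and $L^2$-decay at infinity, $|W(v_n,\tilde v_n)(y)| \leq \epsilon_n/\tilde h_n^2 = o(\tilde h_n^{-1})$ uniformly in $y$. In the oscillating region the WKB ansatz is validated by Step 2, representing $v_n,\tilde v_n$ as unit-normalized oscillations with amplitudes determined by $\mu_0$ and phases $\phi_n,\tilde\phi_n$ of order $1/\tilde h_n$. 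The Wronskian bound pins the phase difference $\tilde\phi_n - \phi_n$ to $o(1)$, which forces $\int_0^\infty v_n\tilde v_n\,dy \to 1$, contradicting orthogonality.

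\textbf{Main obstacle.} The hardest step is the uniform non-concentration in Step 2, valid simultaneously across the bottom-of-well regime ($\tilde h_n \sim 1$), the semi-excited regime ($\tilde h_n \to 0$, $E_n \to 0$), and the non-critical regime ($\tilde h_n \to 0$, $E_n \to E_\infty > 0$). A traditional approach would require separate WKB and connection analyses at the singular point and at the turning point, glued through the oscillating region; the paper's innovation is a single family-level positive-commutator/defect-measure identity that bypasses this gluing, and implementing it in quantitative form is where the real work lies.
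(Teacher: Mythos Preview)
Your overall architecture---contradiction, the three-regime split, the $E$-dependent rescaling $y=E^{1/\gamma}x$ producing an effective parameter $\tilde h = hE^{-(\gamma+2)/(2\gamma)}$, and the use of family semiclassical measures for the potentials $y^\gamma W(E^{1/\gamma}y)$---is exactly the paper's. The difference is in the endgame. The paper does \emph{not} argue via orthogonality of two nearby eigenfunctions. Instead it studies, for \emph{every} $E$ in the window (not just eigenvalues), the normalized $L^2$ solution $G_h(\cdot\,;E)$ and the boundary Cauchy datum $Z_h(E)=G_h(0;E)+ihG_h'(0;E)$, proves the exact identity $dE=h|Z_h(E)|^2\,d\theta$ for $\theta=\arg Z_h$, and observes that $\theta$ changes by $\pi$ between consecutive Dirichlet (or Neumann) eigenvalues. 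Spacing of order $h$ then reduces to a uniform lower bound $|Z_h(E)|\geq \bfc$ over $(V,E)\in\Vcal\times K$, and this is what the measure and WKB estimates are used to prove. The winding argument is cleaner than your Wronskian/orthogonality route precisely because it converts the problem into a single pointwise lower bound, rather than an asymptotic statement about an inner product over the whole half-line.

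There is also a genuine gap in your understanding of what the measures buy. You write that ``the WKB ansatz is validated by Step~2'' and that the family defect-measure identity ``bypasses the gluing''. It does neither. The family semiclassical measure argument in the paper serves only to pass the turning point: it shows (by contradiction) that $\int_0^b |G_h|^2\,dx$ is bounded below uniformly in $(V,E)$, i.e., mass cannot escape to the turning point or to infinity. It gives no pointwise approximation of $G_h$. The link between $|Z_h(0)|$ and $\int_0^b|G_h|^2$ is then obtained by a separate first-order WKB/Volterra estimate on $[0,b]$, which for $\gamma<1$ still requires a matching at the scale $x\sim h$ (the integrability of $r\sim x^{\gamma-2}$ fails). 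So the singular point is \emph{not} handled by the measures; your sketch underestimates this step. Consequently your Step~3---``the Wronskian bound pins the phase difference, which forces $\int v_n\tilde v_n\to 1$''---needs exactly this WKB input to be made precise: the smallness of the semiclassical Wronskian gives $\Im(\alpha_+\overline{\tilde\alpha_+})=o(1)$ only once you have the uniform representation $G_h=\alpha_+\phi_++\alpha_-\phi_-+O(h^{\min(1,\gamma)})$, and turning that into a statement about the full $L^2$ inner product (not just over $[0,b]$) requires further control in the forbidden region. All of this can be done, and indeed the paper's scratchwork contains a variant of your Wronskian approach, but the final version replaces it with the winding argument because the latter avoids these extra steps.
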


Such a theorem is actually equivalent to answering the following question: consider a sequence $(E_h)_{h\geq 0}$ going to some
limit $E_0$ as $h$ goes to $0$ and study the behavior of the sequence $(d_h(E_h))_{h\geq 0}$. When $E_0$ is non-critical then
our result recovers the usual order $h$ separation.
This is completely standard if $\gamma$ is an integer, for, in that case, the potential is smooth
and the full semiclassical machinery can be used. If $\gamma$ is not an integer, the energy surface is not smooth anymore and it must be proved that
the singularity is not strong enough to perturb the order $h$ spacing of eigenvalues.
This can perhaps be done by rather soft techniques such as some Dirichlet-Neumann bracketing argument. We have chosen a different, also
well known, technique that relies in estimating how fast the semiclassical Cauchy datum of the fundamental solution at $x=0$ winds around
the origin. We will observe that this winding is related to non-concentration
at the singular point. One motivation for studying this kind of potential comes from the adiabatic ansatz in a stadium-like
billiard (see \cite{hillairet2012nonconcentration}). In the latter, the potentials that come up are of the form
$x\mapsto x_+^\gamma W(x)$ on the half-line $[-B,+\infty)$ for $B > 0$ and the eigenvalue problem can be restated as a gluing
problem that involves the fundamental solution on the half-line that we study here.
We also point out that our assumptions imply that the energy surface is connected so that no tunneling effect
has to be taken into account (see \cite{HeSjo_mwI84, Mart_Rou88} for the more delicate case involving such tunneling effects).

\subsection*{Organization of the paper}
In Section \ref{s:bottom}, we will treat the bottom of the well regime. All the results of this section can be found in the literature but
we will outline a proof so as to make this paper self-contained.

In Section \ref{sec:spacingviaconcentration}, we will first give a general strategy of proof to obtain the eigenvalue spacing for
$1D$ Schr\"odinger operators. Our assumptions will imply that the vector space of $L^2$ solutions to
$(P_h-E)u =0$ is one dimensional so that the eigenvalue spacing will follow from the study of $G_h(\cdot\, ;\,E)$
which is a conveniently normalized solution
to this equation. We will in particular observe that the winding argument that leads to $h$-spacing
in the non-critical case can be reduced to a concentration
estimate. We will also show that, using an energy-dependent scaling,
the latter estimate in the intermediate regime can be obtained from estimates in the non-critical regime that
are uniform with respect to the potential. 

This will lead us to standard problems in semiclassical analysis with the twist that the potential is not fixed but lives in
some set $\Vcal$ of functions. In Sections \ref{sec:agmon} and \ref{sec:measures} we tackle the problems of exponential decay and
semiclassical measures from this point of view and we prove essentially that the usual statements remain true with constants that are
uniform in $\Vcal$ provided the latter set exhibits some compactness.
These two sections address the way the function $G_h(\cdot \,;\,E)$ may concentrate
in the classically not allowed region and near the turning point so that the singularity at $0$ actually does not play any role.
It then remains to address the classically allowed region and this will be done in Section \ref{sec:WKB}
in which we will combine WKB expansions with a Volterra type approach. We will need only the first order approximation but 
we will have to treat the cases $\gamma<1$ and $\gamma \geq 1$ separately. In the latter case, the first order correction is
of magnitude $h$ and we obtain directly a WKB-approximation for $G$ down to $x=0$. when $\gamma <1$, we will have to perform
a matching at $x=h$ and the first order correction will be of magnitude $h^\gamma$.

In the final section, we will patch all the different regimes to obtain the proof of Theorem \ref{thm:main}.

\subsection*{Acknowledgments} The authors are grateful to Jared Wunsch for helpful conversations. This work initiated when the second author
visited the first for an extended stay as a professeur invit\'e at the Universit\'e d'Orl\'eans and also benefited from the invitation of the first author
to the UNC at Chapel Hill. The authors thank both institutions. J.L.M. acknowledges supports from the NSF through NSF CAREER Grant DMS-1352353 and NSF Grant
DMS-1909035, and L.H. acknowledges the support of the projet r\'egion APR-IA THESPEGE.

\section{Bottom of the well}
\label{s:bottom}
We recall that we consider the following Schr\"odinger equation
\[
  -h^2u''\,+\,V(x)u\,=\,Eu
\]
on the half-line $I=[0,+\infty)$ with either Dirichlet or Neumann boundary condition at $0$.
Before proceeding, we outline the conditions we will place on the potential $V$ moving forward.

\begin{assu}
\label{assu:bot}
The following properties of $V$ hold:
\begin{itemize}
\item The potential $V$ is smooth on $(0,\infty)$ and continuous on $I$.
\item $V(0)=0$ and there exist $\gamma >0$ and $W$ smooth on $[0,\infty)$ such
  that $\forall x>0,~ V(x) = x^\gamma W(x),~W(0)>0.$
\item There exists some $d>0$ such that
  \begin{gather*}
    \forall x\geq d,~V(x)\geq V(d) , \\
    \forall x\in (0,d],~V'(x)>0.
  \end{gather*}
\end{itemize}
\end{assu}

The latter assumption implies that for any $E< V(d)$, the energy surface
\[
  \big \{ (x,\xi)\in I\times \R,~ \xi^2+V(x)=E \big \} 
\]
is compact and connected. It follows that the spectrum of $P_h$ that lies below $V(d)$ consists of eigenvalues
of finite multiplicity (\cite{BirSol87}, Ch. 10.6 or \cite{RSv4}, Ch. XIII). Moreover, since the potential is of limit-point type near infinity
any eigenvalue in the preceding regime is necessarily simple (see Titchmarsch \cite{titchmarsh1946eigenfunction} or \cite{reed1978methods, gesztesy2006spectral, teschl2009mathematical}).
  
\begin{prop}
Under Assumptions \ref{assu:bot}, for any $M$, there exists $c>0$ and $h_0>0$ such that 
\[
\forall h\leq h_0,~ \forall E\in [0,Mh^{\frac{2\gamma}{\gamma
    +2}}]\cap \spec P_h,~ d_h(E)\geq c h^{\frac{2\gamma}{\gamma +2}}.
\]
\end{prop}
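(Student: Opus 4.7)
The plan is to reduce the bottom-of-the-well statement to a non-semiclassical spectral problem by an energy-adapted dilation, then exploit the discreteness and simplicity of the limiting spectrum.

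First, I would rescale. Set $\alpha = \tfrac{2}{\gamma+2}$ and introduce $y = x/h^\alpha$. Then
\[
-h^2 \partial_x^2 + x^\gamma W(x) \;=\; h^{2\gamma/(\gamma+2)} \Bigl( -\partial_y^2 + y^\gamma W(h^\alpha y) \Bigr),
\]
so the eigenvalue equation $P_h u = E u$ with $E = \mu h^{2\gamma/(\gamma+2)}$ is equivalent to
\[
Q_h v \defeq \bigl(-\partial_y^2 + y^\gamma W(h^\alpha y)\bigr) v \,=\, \mu v
\]
on $[0,+\infty)$ with the same boundary condition at $0$. The hypothesis $E \in [0,Mh^{2\gamma/(\gamma+2)}]$ becomes $\mu \in [0,M]$, and the spacing estimate to be proved becomes the statement that, for $h$ small, any two distinct eigenvalues of $Q_h$ in $[0,M]$ differ by at least $c$.

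Second, I would identify the limit operator $Q_0 = -\partial_y^2 + W(0)\, y^\gamma$ on $[0,+\infty)$ with the same boundary condition. Since $W(0)>0$ and $y^\gamma\to +\infty$, $Q_0$ has compact resolvent, purely discrete spectrum $0 \leq a_1 < a_2 < \cdots$ (simple by the limit-point property at infinity and the fact that the operator is a regular Sturm–Liouville operator on any finite interval with limit point at $\infty$), with $a_k \to +\infty$. In particular there are only finitely many $a_k$ in $[0,M]$ and
\[
\delta \defeq \tfrac{1}{2}\min\{\,a_{k+1}-a_k : a_k,a_{k+1}\in[0,M+1]\,\} \,>\,0.
\]

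Third, I would establish that the eigenvalues of $Q_h$ in $[0,M]$ converge to those of $Q_0$ as $h\to 0$, with multiplicity. Since $W$ is continuous and positive, on any fixed bounded $y$-interval $W(h^\alpha y) \to W(0)$ uniformly, and globally $y^\gamma W(h^\alpha y) \geq c_0 y^\gamma$ for small $h$ and $y$ in a neighborhood of $0$, while near infinity the potential $y^\gamma W(h^\alpha y)$ remains bounded below by the original $V(h^\alpha y)/h^{2\gamma/(\gamma+2)}$, which, by Assumption \ref{assu:bot}, diverges uniformly once $h^\alpha y \geq d$. This uniform confinement plus pointwise convergence is enough to get norm-resolvent convergence $Q_h \to Q_0$, hence min–max convergence of the eigenvalues $\mu_k(h) \to a_k$ for each $k$ with $a_k \in [0,M+1]$.

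Finally, fixing such a $k$-range so that all eigenvalues of $Q_h$ lying in $[0,M]$ are among $\mu_1(h),\dots,\mu_N(h)$ for some $N$ independent of $h$, the convergence $\mu_k(h) \to a_k$ gives $|\mu_j(h)-\mu_k(h)| \geq \delta$ for $j\neq k$ and $h\leq h_0$. Undoing the rescaling multiplies this gap by $h^{2\gamma/(\gamma+2)}$, which is exactly the claim with $c=\delta$. The main obstacle is the norm-resolvent (or equivalently min–max) convergence step, because one must control the perturbation $y^\gamma[W(h^\alpha y)-W(0)]$ uniformly enough on the half-line to exclude eigenvalues drifting in from above $M$; the compactness argument above, together with the uniform lower bound on $V$ away from $0$, is what handles this.
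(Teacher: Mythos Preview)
Your proposal is correct and follows essentially the same route as the paper: the scaling $y=x/h^{\alpha}$ with $\alpha=\tfrac{2}{\gamma+2}$, the limiting operator $-\partial_y^2+W(0)y^\gamma$, and a min--max argument to transfer the order-one spacing of the limiting eigenvalues back to $P_h$. The paper's outline phrases the compactness/localization step slightly differently (introducing a cutoff point $x_h=h^{\alpha-\eps}$ and invoking exponential decay beyond it) rather than asserting norm-resolvent convergence, but this is a cosmetic difference in how the same perturbation is controlled.
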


\begin{proof}

As this result is somewhat classical, we only outline the proof and refer the reader to \cite{friedlander2009spectrum, Hil_Kai} for complete details.

We use a scaling argument: set $\alpha =\frac{2}{\gamma+2}$ and
define $v_h(y)=u_h(h^{\alpha} y).$ The function $v_h$ is a solution to 
\[
-v''\,+\, \left( y^\gamma W(h^\alpha y) -e_h\right )v\,=\,0,
\]
where we have put $e_h=h^{-\frac{2\gamma}{\gamma+2}}E_h.$

One can then argue by min-max arguments that $e_h$ is close to an
eigenvalue of the operator 
\[
A(v)=-v''+ W(0)x^\gamma v, 
\]
with the same boundary condition.
In order to estimate the error term, we can
introduce the point $x_h = h^{{\alpha}-\eps}$ for some $\eps>0$, 
and then use the exponential decay for $x>x_h$ (see Section \ref{sec:agmon} below).

The eigenvalues of $A$ are spaced at order $1$ and this gives the result.
\end{proof}

\section{General strategy and scaling}\label{sec:spacingviaconcentration}

\subsection{Energy spacing and eigenfunction concentration}\hfill \\
It is well-known that the spacing between eigenvalues of a semiclassical 1-D Schrödinger operator
around non-critical energies with a connected energy surface is of order $h$. This fact is classically
derived from the Bohr-Sommerfeld quantization rules (cf section 10.5 in \cite{Bender_Orszag78} or \cite{Cdv_BS05, Yafaev11}).
We present here a strategy that, in the end,
relies on a concentration estimate for eigenfunctions. Showing this estimate uniformly with respect to the
potential will be the key to the spacing in the intermediate regime.

Consider the eigenvalue equation
\[
  (P_h-E)u_h\,=\,0,
\]
in which the potential satifies the same assumptions as before and $E$ is in some compact set $K \subset (0,V(d))$.
Since this equation is of limit point type near $\infty$, we know that 
\[
  \dim \left \{ u\in C^ \infty \cap L^2(0,+\infty),~ (P_h-E) u=0\right \} \,=\, 1,
\]
so that there is a unique solution $G(\cdot \,;\,E)$ that satisfies
\begin{gather*}
    (P_h-E)G_h(\cdot\,;\,E) \,=\,0\\
    \int_{0}^{+\infty} |G_h(x \,;\,E)|^2\, dx\,=\,1,\\
    \forall x\geq d,~G_h(x\,;\,E)>0.
\end{gather*}
It is also standard that the mapping $E \mapsto G_h(\cdot \,;\,E)$ is analytic from
$(0,V(d))$ into $L^2((0,+\infty))$. If we denote by $\dot{G}_h(\cdot;E)$ the derivative of $G$ with respect to $E$, then,
by differentiating the eigenvalue equation, we obtain
\begin{equation}\label{eq:defGdot}
  (P_h-E) \dot{G}(\cdot\,;\,E)\,=\,G(\cdot\,;\,E).
\end{equation}

We define
\[
  Z_h(E)\,=\, G_h(0\,;\,E)\,+\,ihG_h'(0,E),
\]
which we can write, in polar coordinates, as
\[
  Z_h(E)\,=\,|Z_h(E)|e^{i\theta_h(E)}
\]
in which $E\mapsto \theta_h(E)$ is analytic.

A straightforward computation yields
\[
  \begin{split}
    |Z_h(E)|^2 \, \dot{\theta_h}(E)\,&=\, \Im (\overline{Z_h(E)}\dot{Z}_h(E))\\
  &=\,\Wcal_0\left[G_h,\dot{G}_h\right],
  \end{split}
\]
where $\Wcal$ is the (semiclassical) Wronskian that is defined by
\[
  \Wcal_x\left[ f, g\right]\,=\,hf(x)g'(x)-hf'(x)g(x).
\]

The semiclassical Wronskian of $G_h$ and $\dot{G}_h$ can also be computed by multiplying equation
\eqref{eq:defGdot} by $G$, integrating, and making two integration by parts (the contribution of $+\infty$ vanishes since
the equation is of limit-point type there and both functions are $L^2$). We obtain
\[
  \begin{split}
    \int_0^{+\infty} G^2_h(x\,;\,E)\, dx&\,=\, h^2 \dot{G}_h'(0\,;\,E)G_h(0\,;\,E)-h^2\dot{G}_h(0\,;\,E)G_h'(0\,;\,E)\\
    &\,=\, h\Wcal_0\left[G_h,\dot{G}_h\right].
\end{split}
\]
Finally, we obtain
\[
 |Z_h(E)|^2\, \dot{\theta}_h (E)\,=\,\frac{1}{h} \int_0^{+\infty} G_h^2(x\,;\,E)\,dx. 
\]
That, we rewrite as
\begin{equation}
\label{Zthetaeq}
  dE\,=\,h |Z_h(E)|^2 d\theta,
\end{equation}
since $G$ is normalized.

Since being an eigenvalue is equivalent to asking that $G_h(\cdot \,;\,E)$ satisfies Dirichlet or Neumann
boundary condition at $0$, it follows that,
between two consecutive eigenvalues $\int d \theta \,=\, \pi$. We will thus get the spacing of order $h$ provided that there exists some
positive constant $c$ such that
\[
 \forall E \in K,~  |Z_h(E)|^2 \,\geq \, c. 
\]

One way to obtain this inequality is by using WKB expansions and semiclassical measures.
Indeed, the WKB expansion near $0$ will yield that, for some small $a$ 
\[
  |Z_h(E)|^2 \asymp \int_0^a G_h^2(x\,;\,E) \, dx
\]
and a semiclassical measure argument will yield that
\[
  \int_0^a G_h^2(x\,;\,E) \, dx \asymp \int_0^{+\infty} G_h^2(x\,;\,E) .
\]

Both these arguments are standard for a smooth potential for non-critical energies.
In the next section we show that an energy-dependent scaling allows to get the estimate
for the intermediate regime by following the same method of proof but for families of potentials.
Showing that the estimates are uniform with respect to both the potential and the energy will
finally yield Theorem \ref{thm:main}.
    
\subsection{Energy scaling for the intermediate region}\hfill \\
\label{s:intermediate}
Choose a sequence $(E_h,u_h)_{h\geq 0}$ that is a solution to \eqref{eq:eigeq} under the standing assumptions on $V$.
Recall that $E_h$ is in the intermediate regime if neither $E_h$ is non-critical, nor $E_h$ is in the bottom of the well regime.
Equivalently, this reads as 
\[
  E_h \tv{h}{0} 0,~~\mbox{and}~~ h^{-\frac{2\gamma}{\gamma+2}}E_h \tv{h}{0} +\infty.
\]

We perform a $E$-dependent scaling on the equation by setting $\tilde{v}_h(z)\,=\,\tilde{u}_h(E_h^{\frac{1}{\gamma}}z)$.
We obtain
  \begin{gather*}
    -h^2\tilde{u}_h'' \,+\, (x^\gamma W(x) -\tilde{E})\tilde{u}_h \,=\,0~\iff\\
    -h^2E_h^{-1-\frac{2}{\gamma}}{\tilde{v}_h}''\,+\, (z^\gamma W(E_h^{\frac{1}{\gamma}}z)-\frac{\tilde{E}_h}{E_h})\tilde{v}_h\,=\,0.
  \end{gather*}
  
   Since $E_h$ is in the intermediate regime :
    \begin{itemize}
      \item $W(E_h^{\frac{1}{\gamma}}\cdot)$ converges to the constant function $W(0)$ (uniformly on every compact set),
      \item $\bar{h}\defeq hE_h^{-\frac{2+\gamma}{2\gamma}}$ tends to $0$. 
    \end{itemize}
    We may thus take $\bar{h}$ as a new genuine semiclassical parameter. By construction, we are now working near the energy $1$
    which is non-critical. Assuming we have a spacing of order $\bar{h}$ uniformly for the sequence of potentials
    $z\mapsto z^\gamma W(E_h^{\frac{1}{\gamma}}z)$, we obtain that any eigenvalue $\tilde{E_h}\neq E_h$ must satisfy 
    \[
      |\frac{\tilde{E_h}}{E_h}-1|\,\geq\, c\bar{h}.
    \]
    Thus, we obtain the bound
    \[
      |\tilde{E}_h-E_h|\,\geq\, c h\cdot E^{\frac{\gamma-2}{2\gamma}}.
    \]

    Consequently, we see that Theorem \ref{thm:main} will follow from the usual semiclassical estimates at a non-critical energy
    provided the latter are proven to hold for singular potential and uniformly. This approach is interesting in its own and we will develop
    it after having made the setting precise.

\subsection{Global Assumptions} 

We fix $\gamma>0,\,0<b<c<d$, $\Kcal$ a compact set in $C^\infty([0,d] ; \R)$ equipped with its Fr\'echet
  topology and $K$ a compact set in $(0,+\infty)$. We denote by $\Vcal$ the set of potentials such that the following assumptions hold.
 
\begin{assu}\label{assu:nc}\hfill \\ 
\begin{itemize}
\item The conditions on $V$ from Assumptions \ref{assu:bot} hold.
\item The restriction of $W$ to $[0,d]$ belongs to $\Kcal$.
\item The following estimates hold
\begin{gather*}
\forall (V,E)\in \Vcal \times K,~\forall x\in [0,b],~E-V(x)\, >\, 0, \\
\forall (V,E)\in \Vcal \times K,~\forall x\in [c,d],~V(x)-E\, >\, 0.\\
\end{gather*}
\end{itemize}
\end{assu}

Let us observe that these assumptions imply that
\[
  \forall E\in K,~\forall x\geq d,~V(x)-E \,\geq \,V(d)-E >0,
\]
so that the operator $P_h-E$ is of limit-point type near $\infty$ which allows us to define
$G_h(\cdot\,;\,E)$ for any $E\in K$ and $V\in \Vcal$. Observe that the notation does not reflect the fact
that the function $G$ also depends on $V$.

We want to prove the following theorem.

\begin{Theorem}\label{thm:interregime}
Under the preceding assumptions, there exists $\bfc$ and $h_0$ such that for any 
$h\leq h_0$, for any $V\in \Vcal$ and any $E_h$ eigenvalue of $P_h$:
\[
  E_h\in K \implies d_{h}(E_h)\,\geq\, \bfc h.
\]
\end{Theorem}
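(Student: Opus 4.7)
The plan is to reduce the theorem to proving a uniform positive lower bound
\[
\inf_{(V,E,h)\in \Vcal\times K\times (0,h_0]}|Z_h(E)|^2 \,\geq\, c\,>\,0,
\]
via the identity \eqref{Zthetaeq}. Indeed, the eigenvalue condition at $0$ (Dirichlet or Neumann) reads $\theta_h(E)\equiv \theta_0\pmod{\pi}$ for a fixed angle $\theta_0$, and by \eqref{Zthetaeq} the map $E\mapsto \theta_h(E)$ is monotone wherever $Z_h$ does not vanish. Integrating \eqref{Zthetaeq} between two consecutive eigenvalues $E_h<\tilde E_h$ yields
\[
\tilde E_h-E_h \,=\, h\int_0^\pi |Z_h(E(\theta))|^2\,d\theta\,\geq\,\pi h\cdot \inf_{E\in K'}|Z_h(E)|^2,
\]
for a slight enlargement $K'$ of $K$. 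Hence Theorem~\ref{thm:interregime} reduces to the uniform lower bound above.

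To produce this lower bound, I would split $[0,+\infty)=[0,a]\cup[a,c]\cup[c,+\infty)$ for a conveniently chosen small $a\in(0,b)$, and combine three ingredients from the subsequent sections. \emph{(i)} The uniform Agmon estimates (Section~\ref{sec:agmon}) give $\int_c^{+\infty}G_h^2\,dx\leq Ce^{-\delta/h}$ with $C,\delta>0$ depending only on $\Kcal$ and $K$, so the classically forbidden region carries negligible mass. \emph{(ii)} The uniform semiclassical measures (Section~\ref{sec:measures}) show that $G_h$ cannot concentrate near the turning point, yielding $\int_0^a G_h^2\,dx\geq \eta$ for a uniform $\eta>0$; informally, any accumulation point of the family $(V_n,E_n,h_n)$ produces a limit measure supported on the non-critical energy surface of a bona fide Hamiltonian $\xi^2+x^\gamma W_\infty$, $W_\infty\in\Kcal$, and invariance under the limit classical flow precludes concentration at the turning point. \emph{(iii)} A uniform first-order WKB construction near $x=0$ (Section~\ref{sec:WKB}) gives
\[
|Z_h(E)|^2 \,\asymp\, \int_0^a G_h^2(x\,;\,E)\,dx
\]
with constants uniform on $\Vcal\times K$, treating $\gamma\geq 1$ (WKB valid down to $x=0$, correction $O(h)$) and $\gamma<1$ (matching at $x=h$, correction $O(h^\gamma)$) separately. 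Combining \emph{(i)}--\emph{(iii)} with the normalization $\int_0^{+\infty}G_h^2\,dx=1$ gives $|Z_h(E)|^2\geq \bfc$ for some uniform $\bfc>0$ and $h$ small enough.

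The main obstacle is promoting the usual semiclassical-measure argument of step \emph{(ii)} to a statement that is uniform over the family $\Vcal$. The standard construction extracts a limiting Wigner measure along a subsequence $h_n\to 0$ for a \emph{fixed} potential, whereas here one must take a joint limit over triples $(V_n,E_n,h_n)$. The Fr\'echet compactness of $\Kcal$ lets one extract $W_n\to W_\infty$ in $C^\infty([0,d])$, so the limiting measure is supported on the energy surface of a genuine Hamiltonian and the commutator/propagation argument can be carried out with constants controlled by seminorms of $W$ in $\Kcal$. A secondary technical difficulty is that $V=x^\gamma W$ fails to be smooth at $x=0$ when $\gamma\notin\N$, which forces the dichotomy $\gamma<1$ vs $\gamma\geq 1$ in step \emph{(iii)} and the Volterra-type treatment of the non-smooth remainder near the singular endpoint.
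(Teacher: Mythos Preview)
Your proposal is correct and follows essentially the same route as the paper: reduce Theorem~\ref{thm:interregime} to a uniform lower bound on $|Z_h(E)|$ via the identity \eqref{Zthetaeq} (this is the paper's Proposition~\ref{prop:minZ}), and then obtain that bound by combining uniform Agmon decay in the forbidden region, a uniform non-concentration estimate from semiclassical measures near the turning point, and first-order WKB on $[0,b]$ with the $\gamma\geq 1$/$\gamma<1$ dichotomy. Your observation that one must work on a slight enlargement $K'$ of $K$ when integrating between consecutive eigenvalues is a detail the paper leaves implicit.
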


The results in Theorem \ref{thm:interregime} will follow from the following proposition.

\begin{prop}\label{prop:minZ}
There exist $\bfc,h_0\,>\,0$ such that 
\[
\forall \ (V,E)\in \Vcal\times K,\,\forall h\leq h_0,~~\left| Z_h\right | \,\geq\,\bfc. 
\]
\end{prop}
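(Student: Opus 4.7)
The plan is to argue by contradiction, following the two-step program flagged at the end of the subsection: a WKB-type estimate near the singular endpoint $x=0$ comparing $|Z_h(E)|^2$ with the local $L^2$-mass of $G_h$ on an interval $[0,b]$, together with a semiclassical-measure estimate forcing a nontrivial fraction of the unit $L^2$-mass of $G_h$ to live on $[0,b]$. The compactness hypotheses on $\Vcal$ and $K$ are what will upgrade the pointwise-in-$(V,E)$ versions of these two estimates to the uniform bound claimed in the proposition.

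Suppose for contradiction there exist sequences $h_n\to 0$, $V_n\in\Vcal$, $E_n\in K$ with $Z_{h_n}(E_n)\to 0$. By Fr\'echet-compactness of $\Kcal$ and compactness of $K$, extract a subsequence so that $V_n\to V_\infty$ in $C^\infty([0,d])$ and $E_n\to E_\infty$; since Assumptions \ref{assu:nc} are closed conditions, the limit satisfies them. First, the uniform Agmon estimates to be proved in Section \ref{sec:agmon} yield constants $\alpha,C>0$, independent of $(V,E)\in\Vcal\times K$, such that
\[
  \int_c^{+\infty}G_{h_n}^2(x\,;\,E_n)\,dx\,\leq\,Ce^{-\alpha/h_n},
\]
so the mass above $c$ is exponentially negligible.

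Second, along a further subsequence, extract a semiclassical defect measure $\mu$ attached to $(G_{h_n})$ as will be developed in Section \ref{sec:measures}. The $L^2$-normalization together with the Agmon tail gives $\mu\bigl(T^*[0,c]\bigr)=1$; standard propagation shows $\mu$ is carried by the compact connected energy curve $\{\xi^2+V_\infty(x)=E_\infty\}$ and is invariant under the Hamiltonian flow of $p=\xi^2+V_\infty$, hence is the normalized Liouville measure on that curve. Projecting down to configuration space, the density is $dx/\sqrt{E_\infty-V_\infty(x)}$ on the classically allowed region, which is positive and integrable, so a neighbourhood of $[0,b]$ has positive $\mu$-mass; this provides $\kappa>0$ and $N_0$ with
\[
  \int_0^b G_{h_n}^2(x\,;\,E_n)\,dx\,\geq\,\kappa,\qquad n\geq N_0.
\]
To conclude, I invoke the WKB analysis to be performed in Section \ref{sec:WKB}: on the classically allowed interval $[0,b]$ one represents $G_h(\cdot\,;\,E)$ as a linear combination of two oscillatory WKB branches whose coefficients are linear functions of the Cauchy data $(G_h(0\,;\,E),hG_h'(0\,;\,E))$, hence of $Z_h(E)$ and its conjugate. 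This produces, uniformly in $(V,E)\in\Vcal\times K$, a constant $C'$ with
\[
  \int_0^b G_h^2(x\,;\,E)\,dx\,\leq\,C'|Z_h(E)|^2.
\]
Applied to $(V_n,E_n,h_n)$ this forces $|Z_{h_n}(E_n)|^2\geq \kappa/C'>0$, contradicting $Z_{h_n}(E_n)\to 0$, and the proposition follows.

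The principal obstacle is making each of the three ingredients genuinely uniform in $V\in\Vcal$. The Agmon step only needs the uniform positivity $V-E>0$ on $[c,d]$ plus the control at infinity, both guaranteed by Assumptions \ref{assu:nc}. The semiclassical-measure step is more delicate because one is letting $h_n$ and the potential $V_n$ go to their limits simultaneously; Fr\'echet-compactness of $\Kcal$ ensures the symbols $p_n=\xi^2+V_n$ converge with all derivatives, so the standard propagation arguments survive the double limit. The most subtle ingredient is the uniform WKB control down to $x=0$: it must treat $\gamma<1$ and $\gamma\geq 1$ separately, perform a matching at scale $x=h$ when $\gamma<1$, and keep the Volterra-type remainders bounded uniformly in $W\in\Kcal$ as $x$ approaches the singular endpoint.
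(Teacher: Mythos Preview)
Your approach is the paper's: Agmon decay in the forbidden region, a semiclassical-measure argument to lower-bound $\int_0^b |G_h|^2$, and a uniform WKB/Volterra comparison between $\int_0^b |G_h|^2$ and $|Z_h|^2$. The packaging into a single contradiction is cosmetically different from the paper's two-step presentation (first Proposition~\ref{prop:massin0b}, then the WKB corollary), but the content is the same.

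There is one genuine soft spot. You assert that ``the $L^2$-normalization together with the Agmon tail gives $\mu(T^*[0,c])=1$'', and then that $\mu$ is the \emph{normalized} Liouville measure. But the semiclassical measure is defined through symbols compactly supported in $(0,d)\times\R$, so a priori it does not see mass that concentrates at the boundary $x=0$; the conclusion $\mu=\lambda\nu$ with $\lambda>0$ is exactly what is at stake. The paper sidesteps this by arranging the measure argument as its own contradiction (Proposition~\ref{prop:massin0b}): assume $\int_0^b|G_h|^2\to 0$, deduce $\lambda=0$, hence $\int_{x_0}^{x_1}|G_h|^2\to 0$ for every compact $[x_0,x_1]\subset(0,\infty)$; adding the assumed vanishing on $[0,b]$ and the Agmon tail then forces the full mass to escape to $+\infty$, which is impossible. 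In your ordering the easiest repair is to swap the last two steps: under the hypothesis $Z_{h_n}\to 0$, your WKB inequality $\int_0^b G_{h_n}^2\le C'|Z_{h_n}|^2$ already gives $\int_0^b G_{h_n}^2\to 0$, and from there the paper's contradiction runs verbatim. Alternatively, the uniform $C^0$ WKB bound on $[0,b]$ (which you invoke anyway) shows $\int_0^\epsilon|G_h|^2\le C\epsilon$ uniformly, directly ruling out boundary concentration and legitimizing your normalization claim.
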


The proof of this proposition is somewhat technical and is the main result of this section.  Hence we postpone it until we have discussed how the proof of Theorem \ref{thm:interregime} follows.  

\begin{rem}
  We have renamed $h$ the semiclassical parameter, although, in the scaling argument,
  we use this bound for the rescaled semiclassical parameter $\bar{h}$.
\end{rem}

\begin{proof}[Proof of Theorem \ref{thm:interregime}]
The same computation as that yielding \eqref{Zthetaeq} gives us 
\[
h|Z_h|^2 \dot{\theta}_h \,=\, 1.
\]
We recall that, for any $(V,E)$ in $ \Vcal\times K$, 
\[
Z_h\,\defeq G_h(0\,;\,E)\,+\,ihG'_h(0\,;\,E)\,=\,|Z_h|e^{i\theta_h}.
\]
The claim thus follows from Proposition \ref{prop:minZ}.

\end{proof}

The proof of Proposition \ref{prop:minZ} will proceed by estimating $G_h(\cdot\,;\,E)$ in different regions of the half-line, uniformly
with respect to the potential. To this end, we will need several uniform quantities that we now define.

\begin{rem}
  Observe that the order $h$ spacing at non-critical energies follow from Thm \ref{thm:interregime} by considering $\Vcal \,=\, \{ V\}$.
\end{rem}

\subsection{Uniform Bounds}
 For any $(V,E)\in \Vcal\times K$, the assumptions imply: 
  \begin{itemize}
  \item There is a unique solution $x_E$ to the equation $V(x_E)=E$ (the turning point). 
  \item $[0,b]$ is in the classically allowed region and $(E-V)$ is uniformly bounded below on it.
    \begin{gather}
      \notag \exists \kappa_{o}>0,~\forall \ (V,E)\in \Vcal\times K,~\forall h\leq h_0,\\
      \label{eq:defkappao} \forall x\in [0,b],~E-V(x)\,\geq \kappa_o.
    \end{gather}
    (The $o$ stands for oscillating since, in the classically allowed region, $G_h$ exhibits highly
    oscillating behaviour). 
  \item $[c,+\infty)$ is in the classically not allowed region, and $(V-E)$ is uniformly bounded below on it.
  \begin{gather}
      \notag \exists \kappa_{e}>0,~\forall \ (V,E)\in \Vcal\times K,~\forall h\leq h_0,\\
      \label{eq:defkappae} \forall x\in [c,+\infty),~V(x)-E\,\geq \kappa_e.
    \end{gather}
    (The $e$ stands for exponential).
  \item The turning point $x_E$ always belong to $[b,c]$. Since, on  $[b,c]$, $V'$ is uniformly bounded below,
    the turning point is non-degenerate.
    We also have the following estimate from below:
    \begin{gather*}
      \notag \forall a\,\leq\,b,~\exists \delta_a >0,~\forall (V,E)\in \Vcal\times K,~\forall h\leq h_0,\\
      \label{eq:defdelta} \forall x\in [a,c],~ V'(x)\,\geq\, \delta_a. 
    \end{gather*}
    We will also use the shortcut $\delta\defeq \delta_b$.
  \item Finally, for any $\ell$, $W^{(\ell)}$ is, uniformly on $[0,d]$, bounded above by some $C_\ell$.
  \item If $\gamma$ is an integer, $W^{(\ell)}$ can be replaced by $V^{(\ell)}$ in the latter statement.
  \end{itemize}

  \begin{rem}
    The point $c$ should not be confused with the (different) constant $\bfc$ that appears in the estimates.
  \end{rem}
  
\section{Uniform concentration estimates}
In this section we aim at showing that the mass of $G_h(\cdot \, ;\,E)$ in the classically allowed region
is bounded below uniformly for $(V,E)\in \Vcal \times K$. 
\subsection{In the classically not-allowed region}\label{sec:agmon}
In this section, we prove that the function $G$ is exponentially small in the region $x\geq c$
with constants that are uniform with respect to $V\in \Vcal$ and $E\in K$.
Such exponential estimates are well-known for a fixed pair $(V,E)$. 
We present here a (classical) rudimentary proof that has the advantages of assuming very little on the potential
and of making it very easy to track the constants.

\begin{prop}\label{prop:GNotaR}
  Under the assumptions \ref{assu:nc} and using the preceding notations, for any $(V,E)\in \Vcal \times K$ and for any $\eta>0$,
  we have   
  \begin{equation}\label{eq:GNotaR}
    \forall x,z >0,~~z\,\geq\,x\,\geq\,x_E\,+\,\frac{\eta}{2},~ G(z)\,\leq\, e^{-\frac{\sqrt{\delta \eta}}{2h}(z-x)} G(x), 
  \end{equation}
  in which we recall that
  \[
    \delta \,\defeq\, \inf \{ V'(x),~V\in \Vcal, \, x\in [b,c] \} \,>\,0.
  \]
\end{prop}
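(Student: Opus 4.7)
The plan is to run a standard Agmon-type comparison with an exponential supersolution, paying close attention to keeping every constant independent of $(V,E) \in \Vcal \times K$.

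First I would establish the uniform pointwise lower bound $V - E \geq \mu^2$ on $[x_E + \eta/2, +\infty)$ with $\mu \defeq \sqrt{\delta\eta/2}$. On $[x_E, c]$ the estimate $V'(t) \geq \delta$ together with $V(x_E) = E$ gives $V(t) - E \geq \delta(t - x_E)$, hence $V - E \geq \delta\eta/2 = \mu^2$ on $[x_E + \eta/2, c]$. For $z \geq c$, one combines $V' > 0$ on $[c,d]$ with the assumption $V \geq V(d)$ past $d$ to obtain $V(z) \geq V(c) \geq E + \delta(c - x_E) \geq E + \mu^2$, at least when $\eta$ is small enough that $x_E + \eta/2 \leq c$; for larger $\eta$ the bound $\kappa_e$ from \eqref{eq:defkappae} takes over.

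Next, fix $x \geq x_E + \eta/2$ and set $\varphi(z) \defeq |G_h(x\,;\,E)|\, e^{-\mu(z-x)/h}$, which is an exact solution of $h^2 \varphi'' = \mu^2 \varphi$. From $h^2 G'' = (V-E)G$ and the previous step, $|G|$ satisfies the distributional inequality
\[
  h^2 |G|'' \,\geq\, (V-E)\,|G| \,\geq\, \mu^2\, |G|,
\]
which holds classically away from the zeros of $G$ and picks up only non-negative Dirac contributions at any isolated zero (upward corner of $|G|$). Therefore $\psi \defeq |G| - \varphi$ satisfies $h^2\psi'' \geq \mu^2 \psi$ distributionally on $[x, +\infty)$, with $\psi(x) = 0$ and $\psi(z) \to 0$ as $z \to +\infty$; the decay of $|G|$ at infinity follows from the fact that any $L^2$ solution of $h^2 u'' = p(z) u$ with $p \geq \kappa_e > 0$ must tend to $0$.

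The usual maximum principle then excludes a positive interior maximum of $\psi$: at a smooth interior maximum the classical inequality $\psi'' \leq 0$ contradicts $h^2\psi'' \geq \mu^2 \psi > 0$; and at a zero of $G$ one has $|G| = 0$, so $\psi = -\varphi < 0$ and no positive maximum can occur there either. Hence $\psi \leq 0$, which gives $|G(z)| \leq |G(x)|\,e^{-\mu(z-x)/h}$. A short convexity argument on $[x_E+\eta/2, +\infty)$ (using $h^2 G'' = (V-E)G$ with $V - E > 0$, together with $L^2$-decay at infinity and the normalization $G > 0$ past $d$) shows that $G$ does not change sign on this half-line, so $|G| = G$ and the absolute values can be dropped; since $\mu \geq \sqrt{\delta\eta}/2$, the inequality \eqref{eq:GNotaR} follows. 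The only technical nuisance I foresee is handling the distributional maximum principle and verifying non-vanishing of $G$ uniformly in $(V,E)$; neither is hard, but both must be phrased carefully so that no hidden dependence on $V$ or $E$ slips into the constants.
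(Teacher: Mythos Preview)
Your argument is correct and follows the same comparison-with-exponential strategy as the paper, but the technical packaging differs in a way worth noting. The paper works with $G^2$ rather than $|G|$: from $(G^2)'' \geq 2GG'' \geq \tfrac{\delta\eta}{h^2}G^2$ one gets a smooth subsolution inequality with no distributional considerations, compares $G^2$ on a finite window $[x,y]$ with the explicit $\sinh$ solution of $\phi''=\omega^2\phi$ matching the boundary values, and then lets $y\to\infty$; the decay $G(z)\to 0$ comes out as a byproduct of integrating the $\sinh$ estimate in $y$ rather than being assumed. Your route via $|G|$ is slightly more direct and even yields a sharper decay rate $\sqrt{\delta\eta}/(\sqrt{2}\,h)$ instead of $\sqrt{\delta\eta}/(2h)$, at the cost of the distributional maximum-principle step and a separate justification that $G\to 0$ at infinity. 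Both work, and neither introduces any dependence on $(V,E)$ beyond $\delta$ and $\eta$.
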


\begin{proof}
  First we observe that, for any $x \in [x_E\,+\,\frac{\eta}{2},c],$ we have, uniformly for
  $(V,E)\in \Vcal\times K$, 
  \[
    V(x)-E \,=\,V(x)-V(x_E) \,\geq\, \delta \cdot \frac{\eta}{2}.
  \]
  Since $V$ is increasing on $[c,d],$ the same estimate is true on $[c,d]$ and then on $[d,+\infty)$ since $V(x)\geq V(d)$ on this interval.
  Finally, we obtain: 
  \[
   \forall (V,E)\in \Vcal\times K,~\forall x\,\geq\, x_E+\frac{\eta}{2},~ V(x)-E \geq\, \delta \cdot \frac{\eta}{2}. 
  \]
  
  From the equation
  \[
    -h^2G''\,+\,(V-E)G=0,
  \]
  we thus infer
  \[
    \forall x\geq x_E+\frac{\eta}{2},~(G^2)''(x) \,\geq\,2G(x)G''(x)\,\geq \, \frac{\delta \eta}{h^2}G^2(x). 
  \]
  We set $\omega = \frac{\sqrt{\delta \eta}}{h}$ and, for any $x_E+\frac{\eta}{2}\leq x < y$, we denote by $\phi$ the solution
  to $\phi''\,=\, \omega^2 \phi$ that takes the same values as $G^2$ at $x$ and $y$. Since $G^2-\phi$ vanishes at
  $x$ and $y$ and satisfies 
  $(G^2-\phi)'' \geq \omega^2 (G-\phi)$, a maximum principle argument shows that
  \[
    \forall z\in [x,y],~G^2(z)\,\leq\, \phi(z).
  \]
  By making $\phi$ explicit, we find
  \begin{gather}
    \notag \forall\,x,y,z,~~ x_E+\frac{\eta}{2}\,\leq\, x \,<\,z\,<\, y,~ \\
    \label{est:exp1} G^2(z)\, \leq \,  G^2(x)\frac{\sinh (\omega(y-z))}{\sinh(\omega(y-x))}
    \,+\,G^2(y)\frac{\sinh (\omega(z-x))}{\sinh(\omega(y-x))}.
  \end{gather}
  In this inequality, we fix $x$ and $z$ and integrate with respect to $y$ in $[z+1,z+2]$, we find
  \begin{gather*}
    \forall\,x,z~~x_E+\frac{\eta}{2}\,\leq\,x\,<\,z,\\
~ G^2(z)\,\leq \, G^2(x) \int_{z+1}^{z+2}\frac{\sinh (\omega(y-z))}{\sinh(\omega(y-x))}\, dy\,+\, \int_{z+1}^{z+2}G^2(y)\, dy. 
  \end{gather*}

  It follows that $G^2(z)$ goes to zero when $z$ goes to $\infty$. So we may let $y$ go to $+\infty$ in the estimate \eqref{est:exp1}  and obtain
  \[
    \forall \ x,z,~~x_E+\frac{\eta}{2}\leq x < z,~G^2(z)\,\leq \, G^2(x) e^{\omega (x-z)}.
  \]
  The claim follows by taking the square root, since, by choice, $G$ is positive in the classically not-allowed region. 
\end{proof}

We use this proposition to prove uniform exponential estimates
for the mass of $G$ and for the semiclassical Cauchy data in the classically not-allowed region. 

 \begin{Lemma}
    \begin{gather*}
      \forall \  \eta>0, ~\exists \ \kappa,h_0>0,~\forall \ (V,E)\in \Vcal\times K,~\forall \ h\leq h_0, \\
      \int_{x_E+\eta}^{+\infty} |G_h(x\,;\,E)|^2 \, dx \,\leq e^{-\kappa/h}.
    \end{gather*}
  \end{Lemma}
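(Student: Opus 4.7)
The plan is to upgrade the pointwise exponential bound of Proposition \ref{prop:GNotaR} to an $L^2$ estimate by exploiting the normalization $\|G_h(\cdot\,;\,E)\|_{L^2} = 1$. A direct application at a single point $x = x_E + \eta/2$ would give $G_h(z)^2 \leq G_h(x)^2 e^{-\sqrt{\delta\eta}(z-x)/h}$, but $G_h(x)^2$ has no obvious uniform pointwise upper bound, so one has to bring in the $L^2$ normalization by integrating in $x$ over a small buffer interval before the tail.

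Concretely, I would first apply Proposition \ref{prop:GNotaR} with $\eta/2$ in place of $\eta$, so that the pointwise decay becomes valid already from $x_E + \eta/4$: for $z \geq x \geq x_E + \eta/4$,
$$G_h(z)^2 \leq G_h(x)^2 e^{-\Omega_h(z-x)}, \qquad \Omega_h := \sqrt{\delta\eta/2}/h.$$
For fixed $z \geq x_E + \eta/2$, rewrite this as $G_h(z)^2 e^{\Omega_h(z-x)} \leq G_h(x)^2$ and integrate in $x$ over $J := [x_E + \eta/4,\, x_E + \eta/2]$. The right-hand side is bounded by $\|G_h\|_{L^2(J)}^2 \leq 1$, while the left-hand side evaluates explicitly to $G_h(z)^2 \Omega_h^{-1} e^{\Omega_h(z - x_E - \eta/2)}(e^{\Omega_h \eta/4} - 1)$. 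This yields the pointwise estimate
$$G_h(z)^2 \leq \frac{\Omega_h\, e^{-\Omega_h(z - x_E - \eta/2)}}{e^{\Omega_h \eta/4} - 1}, \qquad z \geq x_E + \eta/2.$$

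Finally, integrating this over $z \in [x_E + \eta,\, +\infty)$, where $z - x_E - \eta/2 \geq \eta/2$, the exponential tail contributes a factor $\Omega_h^{-1}$ that cancels the prefactor, producing
$$\int_{x_E + \eta}^{+\infty} G_h(z)^2\, dz \leq \frac{e^{-\Omega_h \eta/2}}{e^{\Omega_h \eta/4} - 1} \leq 2\, e^{-3\Omega_h \eta/4}$$
once $h$ is small enough that $e^{\Omega_h \eta/4} \geq 2$. This gives the claim with any $\kappa < \tfrac{3\eta}{4}\sqrt{\delta\eta/2}$, and uniformity in $(V,E) \in \Vcal \times K$ is automatic because $\delta$ is already a uniform constant and the $L^2$-normalization is built into the definition of $G_h$. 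There is no real obstacle: the only judgment call is choosing the buffer interval $J$ so that the exponential decay is active throughout $J$ while still leaving a macroscopic gap to the region $[x_E+\eta,+\infty)$ over which we integrate, which is what forces the two different shifts $\eta/4$ and $\eta/2$.
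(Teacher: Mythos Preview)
Your proof is correct and follows essentially the same approach as the paper: both arguments start from the pointwise decay of Proposition \ref{prop:GNotaR}, integrate over a short buffer interval in the base point $x$ to exploit the $L^2$ normalization, and then integrate over the tail in $z$. The only differences are cosmetic---the order in which the $x$- and $z$-integrations are performed, and the specific choice of buffer interval (the paper uses $[x_E+\eta/2,\,x_E+3\eta/4]$ with the original $\eta$ in Proposition \ref{prop:GNotaR}, whereas you use $[x_E+\eta/4,\,x_E+\eta/2]$ after halving $\eta$).
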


  \begin{proof}
    We start from the estimate
    \[
    \forall \  x,z,~x_E+\frac{\eta}{2}\leq x < z,~G^2(z)\,\leq \, G^2(x) e^{\omega (x-z)},
  \]
  in which we recall that $\omega \,=\,\frac{\sqrt{\delta \eta}}{h}$.
  For any $x\in [x_E+\frac{\eta}{2},x_E+\eta]$ we integrate this equality over $z\in [x_E+\eta,+\infty)$,
  we find
  \[
    \forall \ x\in [x_E+\frac{\eta}{2},x_E+\eta],\,\int_{x_E+\eta}^{+\infty} G^2(z)\, dz \,\leq\, \frac{1}{\omega}e^{-\omega (x_E+\eta -x)} G^2(x).
  \]
  We may now integrate this inequality over $x\in [x_E+\frac{\eta}{2},x_E+\frac{3\eta}{4}]$. Using that $G$ is $L^2$ normalized, we obtain
  \[
    \frac{\eta}{4} \int_{x_E+\eta}^{+\infty} G^2(z)\, dz \,\leq\, \frac{1}{\omega} e^{-\frac{\omega \eta}{4}}.
  \]
  We obtain finally
  \[
    \int_{x_E+\eta}^{+\infty} G^2(z)\, dz \, \leq \frac{4}{\eta \omega} e^{-\frac{\omega \eta}{4}}.
  \]
  The claim follows if we set $\kappa = \frac{\delta^{\frac{1}{2}}\cdot \eta^{\frac{3}{2}}}{4}$ and choose $h_0$ small enough so that the prefactor is bounded by $1$.
  \end{proof}
  
  We now proceed to give an estimate for the semiclassical Cauchy datum in the classically not-allowed region, using the proposition and the eigenvalue equation for $G$.

\begin{prop}
  There exist $h_1>0$ and a constant  $\kappa_1$ such that, for any $(V,E)\in \Vcal\times K$ and any $h\leq h_1$, we have
  \begin{gather*}
    G(c)\, \leq \,e^{-\frac{\kappa_1}{h}}, \\ 
    |hG'(c)|\, \leq \, e^{-\frac{\kappa_1}{h}}.
  \end{gather*}
\end{prop}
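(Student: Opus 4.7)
The plan is to derive both estimates from the pointwise decay of Proposition \ref{prop:GNotaR} and the preceding $L^2$ smallness lemma, together with a short mean-value-plus-equation argument that transfers the bound on $G$ into one on $hG'$. A preliminary observation is that the turning point $x_E$ stays uniformly away from $c$: since $V(c)-V(x_E)=V(c)-E\geq \kappa_e$ while $V'$ is uniformly bounded on $[0,d]$ by compactness of $\Kcal$ in $C^\infty([0,d])$, the mean value theorem gives $c-x_E\geq 2\eta_0$ for some $\eta_0>0$ independent of $(V,E)\in \Vcal\times K$.

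For the first estimate, I would apply the preceding lemma with parameter $\eta_0/2$, obtaining $\int_{x_E+\eta_0/2}^{+\infty} G^2\,dx \leq e^{-\kappa/h}$ for a suitable $\kappa>0$. A pigeonhole argument on the sub-interval $[x_E+\eta_0/2,x_E+\eta_0]$, which has length $\eta_0/2$, produces a point $x_0$ in it with $G^2(x_0)\leq (2/\eta_0)e^{-\kappa/h}$. Since $x_0\geq x_E+\eta_0/2$ and $c-x_0\geq \eta_0$, Proposition \ref{prop:GNotaR} (applied with $\eta=\eta_0$) propagates this smallness to $c$:
\[
G(c)\,\leq\, G(x_0)\,e^{-\sqrt{\delta\eta_0}\,(c-x_0)/(2h)}\,\leq\, C\,e^{-\kappa'/h},
\]
which yields $G(c)\leq e^{-\kappa_1/h}$ after shrinking $h_1$ to absorb the prefactor $C$ into the exponential.

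For the derivative, I would localize on $[c,c+h]$. Choosing $h_1$ small enough that $c+h_1\leq d$, the compactness of $\Kcal$ provides a uniform bound $|V-E|\leq C_V$ on $[c,c+h]$, while Proposition \ref{prop:GNotaR} applied with basepoint $c$ gives $G(x)\leq G(c)$ there. The equation then yields $|h^2 G''(x)|\leq C_V G(c)$ on $[c,c+h]$. Applying the mean value theorem to $G$ on $[c,c+h]$ produces some $x_1\in(c,c+h)$ with $|hG'(x_1)|=|G(c+h)-G(c)|\leq 2G(c)$, and integrating $hG''$ back from $x_1$ to $c$ gives
\[
|hG'(c)|\,\leq\, |hG'(x_1)|\,+\,\int_c^{x_1}|hG''(t)|\,dt\,\leq\,(2+C_V)\,G(c).
\]
Combined with the first bound this yields $|hG'(c)|\leq e^{-\kappa_1/h}$ after a further small adjustment of $\kappa_1$ and $h_1$.

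The only delicate point is keeping every constant uniform in $(V,E)\in \Vcal\times K$, and this is precisely what the compactness of $\Kcal$ is designed for: it provides the uniform turning-point gap $2\eta_0$, the uniform bound $C_V$ near $c$, and the uniform constant $\delta$ already used in Proposition \ref{prop:GNotaR}. No additional semiclassical machinery is needed.
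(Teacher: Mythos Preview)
Your proof is correct and follows essentially the same approach as the paper: a uniform gap $c-x_E\geq 2\eta_0$, the pointwise decay of Proposition \ref{prop:GNotaR} combined with the $L^2$ normalization to bound $G(c)$, and then the equation together with a first-order finite-difference argument on an interval of length $h$ around $c$ to bound $hG'(c)$. The only differences are organizational: the paper integrates the inequality $G^2(z)\leq e^{-\cdots}G^2(x)$ over $x$ directly rather than invoking the $L^2$ lemma plus pigeonhole, and it uses a two-sided Taylor--Lagrange expansion on $[c-h,c+h]$ instead of your one-sided mean-value-plus-integration on $[c,c+h]$; neither difference affects the substance.
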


\begin{proof}
  First we observe that, due to compactness, there exists $\bar{\eta}>0$ such that
  \[
    \forall  \ (V,E)\in \Vcal\times K,~x_E\,+\,\bar{\eta} < c.
  \]
  Choosing $\eta =\frac{\bar{\eta}}{2}$ and $h$ small enough, we may thus make sure that
  \[
    \forall \ (V,E)\in \Vcal \times K,~\forall \ h\leq h_1,~ [c-h,c+h]\subset [c-\frac{\bar{\eta}}{2},d] \subset [x_E+\eta,d].
  \]

  Using Proposition \ref{prop:GNotaR}, we thus obtain that
  \[
    \forall \ x\in [x_E+\frac{\eta}{2},x_E+\frac{3\eta}{4}],~\forall \ z\in [c-h,c+h],~ G^2(z)\,\leq \, e^{-\delta^{\frac{1}{2}}\eta^{\frac{3}{2}}/4h} G^2(x),
  \]
  since $(z-x)\geq \frac{\eta}{4}$ for this range of values of $x$ and $z$.
  Integrating with respect to $x$ and taking the square-root we find:
  \[
    \forall \ z\in [c-h,c+h],~ |G(z)| \, \leq \sqrt{\frac{4}{\eta}}e^{-\delta^{\frac{1}{2}}\eta^{\frac{3}{2}}/8h}.
  \]
  This gives the result if we take $\kappa<\delta^{\frac{1}{2}}\eta^{\frac{3}{2}}/8$ and $h$ small enough so that
  $$ \sqrt{\frac{4}{\eta}}e^{-(\delta^{\frac{1}{2}}\eta^{\frac{3}{2}}-\kappa)/8h} \leq 1.$$
  
  Setting
  \[
    M =\sup \{ V(x)-E,~(V,E)\in \Vcal\times K,~x\in [b,d]\},
  \]
  which is finite by compactness, and using the eigenvalue equation, we also have
  \[
    \forall \ z\in [c-h,c+h],~ |h^2G''(z)| \, \leq M\cdot\sqrt{\frac{4}{\eta}}e^{-\delta^{\frac{1}{2}}\eta^{\frac{3}{2}}/8h}.
  \]

  Using Taylor-Lagrange expansions, there exist $\theta_-\in [c-h,c]$ and $\theta_+\in [c,c+h]$ such that
  \[
    \begin{split}
      G(c-h) &\,=\, G(c)\,-h \cdot G'(c) \,+\,\frac{h^2}{2}G''(\theta_-), \\
      G(c+h) &\,=\, G(c)\,+h \cdot G'(c) \,+\,\frac{h^2}{2}G''(\theta_+).\\
    \end{split}
  \]
  By combining these two equations, we obtain
  \[
    |hG'(c)|\, \leq\, \frac{|G(c-h)|+G(c+h)|}{2}\,+\,\frac{1}{4}(h^2|G''(\theta_+)|+h^2|G''(\theta_-)|).
  \]
 
  It then follows from the preceding estimate that there exist some constant $C$ such that
  \[
    |hG'(C)|\leq Ce^{-\delta^{\frac{1}{2}}\eta^{\frac{3}{2}}/8h}.
  \]
  The claim follows by taking the same $\kappa$ as above and a smaller $h_1$ if needed.
\end{proof}

\begin{rem}
Arguing similarly, we could get an estimate replacing $c$ by any $x\in [c,d)$.
\end{rem}

A consequence of this estimate is that the Cauchy data of $G$ at $c$ is exponentially small uniformly for
$(V,E)\in \Vcal\times K$. More precisely, setting $Z_h(\cdot\,;\,E)\,=\, G_h(\cdot \,;\,E)\,+\,ihG'(\cdot\,;\,E)$, we have 
\begin{gather}
  \nonumber \exists \ h_0,~ \kappa>0,~\forall  \ h\leq h_0,~\forall \ (V,E)\in \Vcal\times K,\\
\label{eq:CDNaR}  \left| Z_h(c\,;\,E)\right| \,\leq e^{-\frac{\kappa}{h}}.
\end{gather}

The latter estimates allow control of $G$ in the classically forbidden region.
We will see that, in the classically allowed region, WKB expansions will also provide us with enough control.
It thus remains to address the turning point. There are several ways to do so (using a Maslov or a Airy Ansatz for instance
\cite{Maslov72, Yafaev11}).
We have chosen a semiclassical measure approach since we think it is a nice generalization of the usual theory.  

\subsection{Semiclassical measures for families of potential}\label{sec:measures}
Let $(V_h,E_h)_{h\geq 0}$ be a family in $\Vcal\times K$. For each smooth observable $a$ that is compactly supported in
$(0,d)\times \R$, we define
\[
  \mu_h(a)\,\defeq \,\langle \mathrm{Op}_h(a) G_h,G_h\rangle,
\]
where $\mathrm{Op}_h$ is some semiclassical quantization procedure (see \cite{zworski2012semiclassical} for instance).
A standard argument shows that, up to extracting a subsequence, there exists a limiting measure $\mu_0$.
Using compactness, we may extract again and assume that $V_h$ converges to $V_0$ and $E_h$ converges to $E_0$.

We then have the following proposition that generalizes the known results when the potential is fixed.

\begin{prop}
  Under the preceding assumptions, the support of the semiclassical measure $\mu_0$ is a subset of the energy surface
  \[
    \big \{ \xi^2+V_0(x)\,=\,E_0 , (x,\xi)\in (0,d)\times \R \big \}.
  \]
  The measure $\mu_0$ is invariant by the hamiltonian flow of $p_0(x,\xi)\,\defeq \xi^2+V_0(x).$ 
\end{prop}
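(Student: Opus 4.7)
The plan is to proceed in two separate steps: first locate the support of $\mu_0$ on the energy surface of $p_0$, then establish flow invariance. The only substantive departure from the textbook argument is that the potential $V_h$ depends on $h$, so every use of symbol calculus must be uniform; this will be justified up front by exploiting that $\{V_h \mid h \leq h_0\}$ is precompact in $C^\infty([0,d])$ (from $\mathcal{K} \subset C^\infty([0,d])$ being compact in its Fr\'echet topology), which supplies uniform seminorm bounds $\|V_h\|_{C^k([0,d])} \leq C_k$. Since every test symbol $a$ will be taken in $C^\infty_c((0,d)\times \R)$, we work in a region where the singularity at $0$ is irrelevant and these bounds apply.

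For the support statement, fix such a test symbol $a$ and introduce $p_h(x,\xi) \defeq \xi^2 + V_h(x)$; in Weyl quantization one has $\mathrm{Op}_h(p_h) = P_h$ on the relevant domain. The Moyal product formula, applied with seminorm estimates that are uniform in $h$ thanks to the $C^k$ bounds on $V_h$, yields
\[
\mathrm{Op}_h\bigl((p_h - E_h)\,a\bigr) \,=\, \mathrm{Op}_h(a)\,(P_h - E_h) \,+\, h R_h,
\]
where $R_h$ is bounded on $L^2$ uniformly in $h$ by Calder\'on--Vaillancourt. Pairing against $G_h$, using $(P_h - E_h) G_h = 0$ and $\|G_h\|_{L^2} = 1$, gives $\mu_h((p_h - E_h)a) = O(h)$. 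By compactness we may assume $V_h \to V_0$ uniformly on $[0,d]$ and $E_h \to E_0$, so $(p_h - E_h) a \to (p_0 - E_0) a$ uniformly; passing to the limit along the chosen subsequence yields $\mu_0((p_0 - E_0)a) = 0$ for every such $a$, hence $\mathrm{supp}\,\mu_0 \subset \{p_0 = E_0\}$.

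For the invariance statement, start from the identity $\langle [P_h,\mathrm{Op}_h(a)]\,G_h, G_h\rangle = 0$, which follows from the self-adjointness of $P_h$ and $P_h G_h = E_h G_h$. The semiclassical commutator expansion, again uniform in $h$ thanks to the $C^k$ bounds on $V_h$, gives
\[
\frac{i}{h}\bigl[P_h,\mathrm{Op}_h(a)\bigr] \,=\, \mathrm{Op}_h\bigl(\{p_h,a\}\bigr) \,+\, h S_h,
\]
with $S_h$ uniformly bounded on $L^2$. Pairing with $G_h$ yields $\mu_h(\{p_h,a\}) = O(h)$. Decomposing $\{p_h,a\} = \{p_0,a\} + \{p_h - p_0,a\}$, the convergence $V_h \to V_0$ in $C^1([0,d])$ makes the second bracket tend to $0$ uniformly on $\mathrm{supp}(a)$, while $\mu_h$ is uniformly bounded on continuous compactly supported functions; the first bracket is a fixed test symbol, so $\mu_h(\{p_0,a\}) \to \mu_0(\{p_0,a\})$ by definition. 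Together we conclude $\mu_0(\{p_0,a\}) = 0$ for every $a \in C^\infty_c((0,d)\times \R)$, which is the weak formulation of invariance under the Hamiltonian flow of $p_0$.

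The main technical obstacle, as noted, is not conceptual but book-keeping: one must make sure that the two symbol-calculus identities above (product and commutator) come with remainders controlled uniformly in $h$ when the symbol $p_h$ is itself $h$-dependent. The compactness of $\mathcal{V}$ reduces this to a finite collection of $C^k$ bounds on the $V_h$, after which Calder\'on--Vaillancourt delivers the uniform $L^2$ estimates and everything else proceeds as in the fixed-potential case.
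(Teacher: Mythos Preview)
Your argument is correct and follows essentially the same approach as the paper. The only organizational difference is that you work directly with the $h$-dependent symbol $p_h$ and the exact equation $(P_h-E_h)G_h=0$, passing the convergence $V_h\to V_0$ at the symbol level, whereas the paper works with the limiting operator $P_h^0$ and absorbs the discrepancy $(V_0-V_h+E_0-E_h)G_h$ as an $o(1)$ term; both routes rely on the same uniform Calder\'on--Vaillancourt bounds coming from compactness of $\Kcal$.
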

\begin{proof}
  We follow the standard proofs. For the support property, we need to show that if $a$ vanishes on a neighbourhood of the energy surface,
  then
  \[
    \mu_h(a) \tv{h}{0} 0.
  \]
  We denote by $P^0_h$ the operator 
  \[
    P^0_h u =-h^2 u''+V_0(x)u.
  \]
  
  We write
  \[
    \begin{split}
      \mu_h(a)\,&=\,\langle \mathrm{Op}_h(a) G_h,G_h\rangle\\
      &=\,\langle \Op_h(\frac{a}{p_0-E_0})(P^0_h-E_0)G_h,G_h\rangle +o(1)\\
      &=\,\langle \Op_h(\frac{a}{p_0-E_0})(V_0-V_h+E_0-E_h)G_h,G_h\rangle +o(1)\\
      &\tv{h}{0} 0,
    \end{split}
  \]
  where we have used that $a$ vanishes on the energy surface so that $\frac{a}{p_0-E_0}$ is smooth with compact support, and
  in the latter stage the fact that $(V_0-V_h+E_0-E_h)$ converges to $0$ on $[0,d]$ and $G_h$ is exponentially small
  on $[d,+\infty)$.

  For the invariance property, we write 
\[
      \begin{split}
        \frac{h}{i}\big[(\langle \mathrm{Op}_h(\left\{p_0,a\right\})G_h,G_h\rangle +o(1)\big]&\,=\,\langle [P_h^0,\mathrm{Op}_h(a)]G_h,G_h\rangle\\
        &\,=\,\langle [P_h,\mathrm{Op}_h(a)]G_h,G_h\rangle\,\\
        & ~~+\,\langle [V_h-V_0,\mathrm{Op}_h(a)]G_h,G_h\rangle \\
        &\,=\,\frac{h}{i}\big[(\langle \mathrm{Op}_h(\left\{V_h-V_0,a\right\})G_h,G_h\rangle\,+\,o(1)\big]) .
      \end{split}
    \]
    We now use the fact that the norm of a pseudodifferential operator on $L^2$ depends on the uniform norm of a finite number of
    derivatives of the symbol and that $\left\{V_h-V_0,a\right\}$ and all its derivatives converge uniformly to $0$ on the support of
    $a$.
  \end{proof}

  The semiclassical measure can be extended to symbols that are not compactly supported in $\xi$, in particular to symbols that only depend
  on $x$.
  
  In dimension $1$, $\mu_0$ is thus determined up to a factor. More precisely, according to the assumptions, there exists $\phi_0$ defined by
\[
\xi^2\,+\,V(\phi_0(\xi))\,=\,E_0
\]
and there exists $c$ such that $\mu_0=c\nu$ where $\nu$ is defined by
  \[
    \nu(a)\,=\,\int a(\phi_0(\xi),\xi)\, \frac{d\xi}{V_0'(\phi_0(\xi))}.
  \]

For a smooth function $\chi$ whose support is a subset of $(0,x_{E_0})$, we have the alternative expression:
\[
\nu(\chi)\,=\, \int \chi(x)\frac{dx}{\sqrt{E-V(x)}}.
\]

Using the semiclassical measure, we obtain that the mass of $G_h$ is uniformly bounded below in the 
classically allowed region $V(x)\leq E$.

\begin{prop}\label{prop:massin0b}
There exists positive constants $\bfc$ and $h_0$ such that 
\begin{equation}\label{eq:massin0b}
\forall \ h\leq h_0,~\forall \ (V,E)\in \Vcal\times K,~~\int_0^b \left| G_h(x\,;\,E)\right|^2 dx \,\geq\, \bfc.
\end{equation}
\end{prop}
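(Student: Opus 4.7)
The plan is to argue by contradiction and exploit the semiclassical measure machinery of Subsection \ref{sec:measures}, testing it against two complementary $x$-only cutoffs. Assume the statement fails: there is a sequence $(h_n,V_n,E_n)\in(0,+\infty)\times\Vcal\times K$ with $h_n\to 0$ and $\int_0^b|G_{h_n}(x\,;\,E_n)|^2\,dx\to 0$. By the compactness of $\Kcal$ and $K$, up to extraction I assume $V_n\to V_0$ in $\Kcal$ and $E_n\to E_0\in K$, and, extracting once more, that the semiclassical measures $\mu_{h_n}$ converge weakly to some nonnegative $\mu_0$. The preceding proposition then forces $\mu_0 = c'\nu$ for some $c'\geq 0$.

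The first step is to show that $c'=0$. Picking $\chi_1\in C_c^\infty((0,b))$ nonnegative and positive on a subinterval, and using that $\mathrm{Op}_h(\chi_1)$ acts as multiplication by $\chi_1(x)$, the hypothesis gives
\[
\mu_0(\chi_1) \,=\, \lim_{n\to\infty}\int \chi_1(x)|G_{h_n}|^2\,dx \,\leq\, \|\chi_1\|_\infty\int_0^b|G_{h_n}|^2\,dx \,\longrightarrow\, 0.
\]
Since $b<x_{E_0}$ (because $E_0-V_0\geq\kappa_o>0$ on $[0,b]$), the alternative expression for $\nu$ recalled above yields $\mu_0(\chi_1) = c'\int_0^b \chi_1(x)(E_0-V_0(x))^{-1/2}\,dx$, and the integral is strictly positive. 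Hence $c'=0$, so $\mu_0\equiv 0$.

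To derive the contradiction, I pick a second cutoff $\chi_2\in C_c^\infty((0,d))$ with $0\leq\chi_2\leq 1$ and $\chi_2\equiv 1$ on $[b,c]$. Combining the hypothesis on $[0,b]$ with the exponential decay estimate $\int_c^{+\infty}|G_{h_n}|^2\,dx = O(e^{-\kappa/h_n})$ from Subsection \ref{sec:agmon}, the $L^2$ normalization $\|G_{h_n}\|_{L^2}=1$ implies $\int\chi_2(x)|G_{h_n}|^2\,dx\geq \int_b^c|G_{h_n}|^2\,dx = 1-o(1)$, so $\mu_0(\chi_2)\geq 1$, contradicting $\mu_0\equiv 0$. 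The main delicate point, already handled by the remark after the semiclassical measure proposition, is the extension of the weak limit to $x$-only symbols; since both $\chi_1$ and $\chi_2$ have compact support strictly inside $(0,d)$, no information about the behavior of $G_h$ near the singular endpoint $x=0$ is required.
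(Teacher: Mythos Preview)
Your proof is correct and follows essentially the same route as the paper: contradiction, extraction of a semiclassical measure $\mu_0=c'\nu$, then $c'=0$ via a test function supported in $(0,b)$, and finally a contradiction with the $L^2$ normalization using the Agmon decay past $c$. The only cosmetic difference is that the paper phrases the last step as ``$\mu_0=0$ forces $\int_{x_0}^{x_1}|G_h|^2\to 0$ for any $[x_0,x_1]\subset(0,+\infty)$, so the mass escapes to infinity,'' whereas you test directly against a single cutoff $\chi_2$ equal to $1$ on $[b,c]$; both are the same argument.
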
 

\begin{proof}
The proof is a typical application of using semiclassical measures to prove (non-)concentration estimates. 
By contradiction, we assume that the estimate \eqref{eq:massin0b} does not hold. We can thus find a sequence $(V_h,E_h)$  with 
$h$ going to $0$ such that 
\begin{equation}\label{eq:assum}
\int_0^b \left| G_h(x\,;\,E)\right|^2 dx \tv{h}{0} 0.
\end{equation}
Using compactness, we may first extract subsequences and also assume that  $(V_h,E_h)$ tends to a limiting $(V_0,E_0)$. 
We then extract a subsequence again to obtain a semiclassical measure $\mu_0$. The preceding argument implies that there exists 
$\lambda\geq 0$ such that $\mu_0 = \lambda \nu$.  
Next, we observe that the assumption \eqref{eq:assum} implies that $\lambda=0$. Indeed, for any non-negative function $\chi$ 
that has compact support in $(0,b)$ and that is bounded above by $1$ we have  
\[
\left \langle \mathrm{Op}_h(\chi)G_h, G_h \right \rangle \,=\, \,\int_0^b \chi(x)\left| G_h(x\,;\,E)\right|^2 dx \,
\leq\,\int_0^b \left| G_h(x\,;\,E)\right|^2 dx.
\]
It follows that 
\[
\lambda \int_0^b \chi(x)\,\frac{dx}{\sqrt{E-V(x)}}\,=\,0,
\]
and hence $\lambda=0$.

By choosing an appropriate symbol, this implies that for any closed interval $[x_0,x_1]\subset (0,+\infty)$, we have
\[
\int_{x_0}^{x_1} |G_h(x\,;\,E)|^2\, dx \tv{h}{0} 0. 
\] 

Setting $x_0=b$, summing  and using \eqref{eq:assum}, we obtain that, for any $x_1>b$
\[
\int_{0}^{x_1} |G_h(x\,;\,E)|^2\, dx \tv{h}{0} 0. 
\]
Since $G_h$ is normalized, this implies that the mass of $G_h$ escapes to $+\infty$ but this is in contradiction with the 
estimates in the classically not allowed region.  
\end{proof}

\subsection{In the classically allowed region}\label{sec:WKB}
We now work on $[0,b]$. In this interval, we know that $E-V$ is uniformly bounded from below so that we can
perform WKB approximation of solutions. For the estimate we are looking for only a first order
WKB approximation is needed, but the lack of smoothness at $x=0$ creates small additional complications.
In particular, we will first make the assumption that $\gamma \geq 1$ and then explain how to modify
the proof for $\gamma \in (0,1)$.

\begin{rem}
  We actually conjecture that the following full asymptotic expansion for $Z_h$ holds: 
  \[
Z_h\,=\,\sum_{\substack{m,n\geq 0,\\~m+n\geq 1}} a_{m,n}h^{m\gamma\,+\,n}.
\]
The leading term in that expansion is thus $h^\gamma$ if $\gamma \in (0,1)$ and $h$
if $\gamma \geq 0$. This also explains the two cases.  Proving such a uniform expansion 
will be a topic of future work and is not required to the proof of the results contained here. 
\end{rem}

Let $(V,E)$ be in $\Vcal\times K$ and $G_h(\cdot \,;\,E)$ be defined as before.
We define the functions $S,\,a,\, \phi_{\pm}$  on $[0,b]$ by 
\[
  \begin{split}
    S(x)& =\, \int_0^x \sqrt{E-V(y)}\,dy ,\\ 
    a(x)& =\, (E-V(x))^{-\frac{1}{4}},\\
    \phi_{\pm}(x)&=\,a(x)e^{\pm\frac{i}{h}S(x)}.
  \end{split}
\]

A straightforward computation yields
\[
  -h^2 \phi_{\pm}'' \,+\,(V-E)\phi_{\pm}\,=\,h^2\cdot r\phi_{\pm},
\]
where we have set $r\,\defeq\,-\displaystyle \frac{a''}{a}.$

This computation implies that, on $[0,b]$, $\phi_\pm$ is a basis of solutions to the equation 
\[
-h^2 y''\,+\,(V-E-h^2r)y\,=\,0.
\]
 
Let $u$ be a solution to
\[
  -h^2u''\,+\,(V-E)u\,=\,0.
\]
The classical method consists in saying that $u$ is a solution to the former equation with an inhomogeneous term 
that reads $-h^2ru$ and then in applying the variation of constants method. We find that there exists
constants $\alpha_{\pm}$ such that, for all $x\in (0,b]$, we have 
\[
  u(x)\,=\, \alpha_+\phi_+(x)\,+\,\alpha_-\phi_-(x)\,-\,
  \frac{h}{2i}\int_x^b r(y)u(y)\left[\phi_-(y)\phi_+(x)-\phi_+(y)\phi_-(x) \right ] \,dy.
\]
We define the operator $L_h$ by 
\[
  L_h[u](x)\,=\,\frac{h}{2i}\int_x^b r(y)u(y)\left[\phi_-(y)\phi_+(x)-\phi_+(y)\phi_-(x) \right ] \,dy.
\]
so that the preceding equation rewrites 
\[
  \left( \mathrm{id}\,+\,L_h \right ) [u]\,=\,\alpha_+\phi_+\,+\,\alpha_-\phi_-. 
\]
The operator $L_h$ is easily seen to be linear from $C^0([0,b];\C)$ into itself.

Using the compactness of $\Kcal$ and $K$, there exist $C_1$ and $C_2$ such that, for all $(V,E)\in \Vcal\times K$
and all $y\in [O,b]$:
\[
  \begin{split}
    |r(y)|&\leq\, C_1 y^\rho, \\
    |a(y)|&\leq\, C_2,
  \end{split}
\]
where $\rho = \gamma-2$ if $\gamma \in (0,2)\setminus \{1\}$ and $\rho=0$ if $\gamma = 1$ or $\gamma \geq 2$.

\begin{rem}
In the sequel we will denote by $C$ a generic constant that is uniform for 
$(V,E)$ in $\Vcal\times K$. Observe that this constant may change from one line to the other. 
\end{rem}

We obtain that, for all $(V,E)\in \Vcal\times K$,
\[
  \forall \ x\in (0,b),~~\left| L_h[u](x) \right | \leq \, C\cdot h \cdot \int_x^b y^{\rho}\,dy \cdot \|u\|_{C^0([0,b])}.
\]

If $\gamma \geq 1$ then the integral on the right is convergent and we obtain that the operator norm of $L_h$
is (uniformly w.r.t. $(V,E)$) bounded by $C\cdot h$. 

\begin{prop}\label{prop:firstordergammalarge}
  Let $\gamma \geq 1$ then there exists a constant $C$ that is uniform with respect to $(V,E)\in \Vcal\times K$
  and $h_0$ such that, for any $h\leq h_0$ there exists $\alpha_\pm(h)$ such that
  \[
    \begin{split}
      \| G_h - \alpha_+\phi_+-\alpha_- \phi_- \|_{C^0([0,b])}\,&\leq \, Ch,  \\
      \| hG_h' - \alpha_+h\phi_+'-\alpha_- h\phi_-' \|_{C^0([0,b])}\,&\leq \, Ch. \\
    \end{split}
  \]
  
\end{prop}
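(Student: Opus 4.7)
The plan is to construct $\alpha_\pm$ by matching Cauchy data at the endpoint $x=b$ and then identify the approximation error with $-L_h[G_h]$. More precisely, I would define $\alpha_\pm$ as the unique solution of the linear system
\[
G_h(b)=\alpha_+\phi_+(b)+\alpha_-\phi_-(b),\qquad hG_h'(b)=\alpha_+h\phi_+'(b)+\alpha_-h\phi_-'(b),
\]
whose coefficient matrix has determinant $\mathcal{W}_b[\phi_+,\phi_-]=-2i$, a nonzero constant uniform in $(V,E)\in\Vcal\times K$. By variation of parameters applied to the inhomogeneous equation $-h^2\phi''+(V-E-h^2r)\phi=-h^2ru$, $G_h$ satisfies the Volterra identity $(\mathrm{id}+L_h)[G_h]=\alpha_+\phi_+ +\alpha_-\phi_-$ on all of $[0,b]$; the kernel of $L_h$ and its $x$-derivative vanish at $x=b$, which is exactly why matching data at $b$ produces the correct $\alpha_\pm$.

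Once the Volterra identity is in hand, the two inequalities of the proposition reduce to the $L^\infty$ bounds $\|L_h[G_h]\|\leq Ch\|G_h\|$ and $\|hL_h[G_h]'\|\leq Ch\|G_h\|$. For $\gamma\geq 1$ the bound $|r(y)|\leq Cy^\rho$ has $\rho>-1$, so $\int_0^b y^\rho\,dy$ is finite, and since $|\phi_\pm|$ and $|h\phi_\pm'|$ are uniformly bounded on $[0,b]$, both operator-norm bounds follow immediately (the derivative case uses that the boundary contribution $K(x,x)$ in $\partial_x L_h[u]$ vanishes identically). The whole proposition therefore reduces to an a priori bound $\|G_h\|_{L^\infty([0,b])}\leq C$ uniform in $(V,E,h)$.

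The main obstacle is producing this a priori bound without circularly invoking the WKB description we are trying to prove. I would use the Pr\"ufer-type invariant
\[
\mathcal{E}(x)=G_h(x)^2+\frac{(hG_h'(x))^2}{E-V(x)},
\]
well defined on $[0,b]$ because $E-V\geq \kappa_o$. Using $-h^2G_h''=(E-V)G_h$ one computes $\mathcal{E}'(x)=V'(x)(hG_h'(x))^2/(E-V(x))^2$, so $|\mathcal{E}'(x)|\leq C\,\mathcal{E}(x)$ with $C$ uniform, and Gronwall confines $\mathcal{E}$ to vary by at most a bounded factor on $[0,b]$. To anchor an absolute value I would integrate:
\[
\int_0^b\mathcal{E}(x)\,dx\leq \int_0^b G_h^2\,dx+\kappa_o^{-1}\int_0^b (hG_h')^2\,dx\leq 1+\kappa_o^{-1}\|hG_h'\|_{L^2(\R_+)}^2,
\]
and the global energy identity $\|hG_h'\|_{L^2(\R_+)}^2+\int_0^{+\infty}V\,G_h^2\,dx=E$, obtained by pairing the eigenvalue equation with $G_h$ and integrating by parts (boundary terms vanish by the Dirichlet/Neumann condition at $0$ and by the exponential decay at infinity from Section \ref{sec:agmon}), gives $\|hG_h'\|_{L^2(\R_+)}^2\leq E\leq \sup K$ uniformly. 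Hence $\int_0^b\mathcal{E}\leq C$; the mean value theorem produces $x_0\in[0,b]$ with $\mathcal{E}(x_0)\leq C/b$, and the Gronwall comparison spreads this to $\mathcal{E}(x)\leq C'$ on all of $[0,b]$. This yields $\|G_h\|_{L^\infty([0,b])}$ and $\|hG_h'\|_{L^\infty([0,b])}$ uniformly bounded, which combined with the operator estimate on $L_h$ closes both inequalities.
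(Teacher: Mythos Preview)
Your Volterra-equation strategy is exactly the one the paper uses: write $(\mathrm{id}+L_h)[G_h]=\alpha_+\phi_+ +\alpha_-\phi_-$ with $\alpha_\pm$ determined by the Cauchy data at $b$, observe that $\|L_h\|_{\Lcal(C^0)}\le Ch$ for $\gamma\ge 1$ because $y\mapsto y^\rho$ is integrable on $[0,b]$, and read off both estimates from $G_h-(\alpha_+\phi_+ +\alpha_-\phi_-)=-L_h[G_h]$ and its $x$-derivative. Your observation that this only yields $\|L_h[G_h]\|\le Ch\,\|G_h\|_{C^0}$, and that an a~priori sup bound on $G_h$ is needed to reach the \emph{absolute} $Ch$ stated in the proposition, is well taken; the paper's proof is terse on this point and in effect proves the relative bound $Ch\,\|\alpha_+\phi_++\alpha_-\phi_-\|$, deferring the absolute one to the subsequent corollary (where $|\alpha|_{\C^2}\asymp\|G_h\|_{L^2([0,b])}\le 1$ is extracted from the normalization).

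Your Pr\"ufer route to the sup bound is a good idea, but one step is wrong: $G_h(\cdot\,;E)$ is the $L^2$-normalized solution selected by decay at infinity, defined for \emph{every} $E\in K$, and it satisfies \emph{no} boundary condition at $0$ unless $E$ happens to be an eigenvalue --- indeed $Z_h(E)=G_h(0;E)+ihG_h'(0;E)$ is precisely the nonvanishing quantity under study. Hence the boundary term $h^2G_h'(0)G_h(0)$ in your global energy identity does \emph{not} vanish, and the bound $\|hG_h'\|_{L^2(\R_+)}^2\le E$ is unjustified. A clean repair that stays within your framework: use the modified invariant $\widetilde{\mathcal E}(x)=(E-V(x))G_h(x)^2+(hG_h'(x))^2$, for which $\widetilde{\mathcal E}'(x)=-V'(x)G_h(x)^2$ on all of $(0,d]$. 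Since $|V'|$ is uniformly bounded on $[0,d]$ when $\gamma\ge 1$ and $\int_0^d G_h^2\le 1$, the total variation of $\widetilde{\mathcal E}$ on $[0,c]$ is at most a uniform constant; anchoring at $x=c$, where Section~\ref{sec:agmon} gives $|G_h(c)|+|hG_h'(c)|\le e^{-\kappa/h}$, yields $\widetilde{\mathcal E}\le C$ on $[0,b]$ and hence $\|G_h\|_{C^0([0,b])}+\|hG_h'\|_{C^0([0,b])}\le C$, which closes your argument.
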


\begin{proof}
  According to the previous computation, there exists $\alpha_+$ and $\alpha_-$ so that
  \[
  \left( \mathrm{id}\,+\,L_h \right ) [G]\,=\,\alpha_+\phi_+\,+\,\alpha_-\phi_-,
\]
  and a uniform $C$ such that
  \[
    \| L_h\|_{\Lcal(C^0([0,b]))} \,\leq \,C\cdot h.
  \]
  We choose $h_0$ so that $C\cdot h_0\,<\,1$. It follows that $\mathrm{id}\,+\,L_h$ is invertible and
  \[
    \Big \| \big( \mathrm{id}\,+\,L_h\big)^{-1}-\mathrm{id} \Big \|_{\Lcal(C^0([0,b]))}\,\leq\, C\cdot h.
  \]
  The first estimate on $G_h$ follows. For the second one, we first observe that
  \[
    G'(x)\,=\, \alpha_+\phi_+'(x)\,+\,\alpha_-\phi_-'(x)\,-\,
  \frac{h}{2i}\int_x^b r(y)G(y)\left[\phi_-(y)\phi_+'(x)-\phi_+(y)\phi_-'(x) \right ] \,dy.
  \]
  The integral is then uniformly bounded since $G$ and $h\phi_\pm'$ are bounded in $C^0$ (recall that $\gamma \geq 1$)
  and $r$ is integrable.
\end{proof}

\begin{cor}
  There exist uniform constant $m_1, M_1, m_2, M_2$ so that
  \begin{gather*}
    m_1( |\alpha_+|^2\,+\,|\alpha_-|^2)\,\leq\, \big| G(0\,;\,E)\,+\,ihG'(0\,;\,E) \big|^2 \,\leq\, M_1^2( |\alpha_+|^2\,+\,|\alpha_-|^2), \\
    m_2( |\alpha_+|^2\,+\,|\alpha_-|^2)\,\leq\, \int_0^b |G(x\,;\,E)|^2\, dx \,\leq\, M_2^2( |\alpha_+|^2\,+\,|\alpha_-|^2). \\
  \end{gather*}
\end{cor}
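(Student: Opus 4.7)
The plan is to read off both double inequalities from the first-order WKB approximation of Proposition \ref{prop:firstordergammalarge}, extract the leading $|\alpha_+|^2+|\alpha_-|^2$ dependence, and absorb the remainders uniformly in $(V,E)\in\Vcal\times K$. Two preliminary observations drive the argument. First, $G_h$ is real-valued (it satisfies a real ODE and is positive on $[c,+\infty)$), so with $\phi_-=\overline{\phi_+}$ the decomposition $G_h\approx\alpha_+\phi_++\alpha_-\phi_-$ forces $\alpha_-=\overline{\alpha_+}$, hence $|\alpha_+|^2+|\alpha_-|^2=2|\alpha_+|^2$. Second, a careful reading of the proof of Proposition \ref{prop:firstordergammalarge} shows that the $C^0$ remainders are \emph{relative} rather than absolute: $\|G_h-\alpha_+\phi_+-\alpha_-\phi_-\|_{C^0([0,b])}\leq Ch(|\alpha_+|+|\alpha_-|)$, and similarly for $hG_h'$, because the contraction estimate $\|L_h\|\leq Ch$ is applied to $\alpha_+\phi_++\alpha_-\phi_-$ whose $C^0$-norm is $\leq C(|\alpha_+|+|\alpha_-|)$. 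This refined form is what will make every remainder below proportional to $|\alpha_+|^2+|\alpha_-|^2$.

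For the first line of the corollary, I would evaluate the WKB ansatz at $x=0$ using $S(0)=0$, $V(0)=0$ and $a(0)=E^{-1/4}$. Using $\alpha_-=\overline{\alpha_+}$ a direct computation yields
\[
Z_h^{WKB}\,=\,2a(0)\Re(\alpha_+)\,-\,2ia(0)\sqrt{E}\,\Im(\alpha_+)\,+\,2iha'(0)\Re(\alpha_+),
\]
so that
\[
|Z_h^{WKB}|^2\,=\,4a(0)^2\bigl(\Re(\alpha_+)^2+E\,\Im(\alpha_+)^2\bigr)\,+\,O\bigl(h|\alpha_+|^2\bigr).
\]
Since $E$ and $a(0)=E^{-1/4}$ range in compacts of $(0,+\infty)$, the leading term is uniformly pinched between two positive multiples of $|\alpha_+|^2+|\alpha_-|^2$; combined with the relative WKB remainder, which contributes $O(h)(|\alpha_+|^2+|\alpha_-|^2)$ to $|Z_h|^2$ via the expansion $|Z_h|^2=|Z_h^{WKB}|^2+2\Re(Z_h^{WKB}\overline{R_Z})+|R_Z|^2$, the first double inequality follows for $h$ small.

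For the second line, I would split $|G_h|^2=|G_h^{WKB}|^2+2\Re(G_h^{WKB}\overline{R})+|R|^2$ on $[0,b]$. The diagonal part of $|G_h^{WKB}|^2$ contributes $(|\alpha_+|^2+|\alpha_-|^2)\int_0^b a^2\,dx$, where $\int_0^b a^2\,dx=\int_0^b(E-V)^{-1/2}\,dx$ is uniformly bounded above and below thanks to $E-V\geq\kappa_o$ on $[0,b]$. The off-diagonal part is the oscillatory integral $2\Re\bigl(\alpha_+\overline{\alpha_-}\int_0^b a^2 e^{2iS/h}\,dx\bigr)$, which is $O(h)(|\alpha_+|^2+|\alpha_-|^2)$ by a single non-stationary phase integration by parts (using $S'=\sqrt{E-V}\geq\sqrt{\kappa_o}>0$). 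The cross-with-$R$ and $|R|^2$ terms are both $O(h)(|\alpha_+|^2+|\alpha_-|^2)$ by Cauchy-Schwarz and $\|R\|_{L^2([0,b])}\leq Ch(|\alpha_+|+|\alpha_-|)$. The main technical subtlety is precisely this bookkeeping: one must verify that every remainder is \emph{proportional} to $|\alpha_+|^2+|\alpha_-|^2$, not merely $O(h)$ absolutely, for otherwise the two-sided bounds would degenerate as $\alpha_\pm$ shrink; once this is in place, the corollary follows uniformly in $(V,E)\in\Vcal\times K$ for $h$ small, independently of Proposition \ref{prop:massin0b}.
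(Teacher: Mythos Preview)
Your argument is correct, and it differs from the paper's in one substantive way. The paper works with the \emph{absolute} remainder $O(h)$ stated in Proposition~\ref{prop:firstordergammalarge} and then invokes Proposition~\ref{prop:massin0b} (the semiclassical-measure lower bound $\int_0^b|G_h|^2\geq \bfc$) to conclude that $|\alpha_+|^2+|\alpha_-|^2$ is itself uniformly bounded below; once that is known, every absolute $O(h)$ term is automatically dominated by $|\alpha_+|^2+|\alpha_-|^2$ and both double inequalities follow. You instead go back into the proof of Proposition~\ref{prop:firstordergammalarge}, observe that the contraction $(\mathrm{id}+L_h)^{-1}-\mathrm{id}$ is applied to $\alpha_+\phi_++\alpha_-\phi_-$ so that the remainder is really $O(h)\,(|\alpha_+|+|\alpha_-|)$, and derive the two-sided bounds directly, without appealing to Proposition~\ref{prop:massin0b}. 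This is a genuine streamlining of the corollary's proof, though of course Proposition~\ref{prop:massin0b} is still needed later to obtain the final lower bound on $|Z_h|$.

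Your explicit use of the reality of $G_h$ to force $\alpha_-=\overline{\alpha_+}$ is also a point the paper leaves implicit. It is essential for the first line: without it, the linear map $(\alpha_+,\alpha_-)\mapsto Z_h^{\mathrm{WKB}}$ is not injective modulo $O(h)$ (for instance when $E=1$ one has $\phi_+(0)+ih\phi_+'(0)=O(h)$), so the lower bound would fail for general complex $\alpha_\pm$. With $\alpha_-=\overline{\alpha_+}$ your computation $|Z_h^{\mathrm{WKB}}|^2=4a(0)^2\bigl((\Re\alpha_+)^2+E\,(\Im\alpha_+)^2\bigr)+O(h)|\alpha_+|^2$ gives the required nondegeneracy.
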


\begin{proof}
We denote by $\alp\,=\,{}^t(\alpha_+,\alpha_-)$ the (column)-vector in $\C^2$ and by 
{$|\alpha|_{\C^2}\defeq (|\alpha_+|^2\,+\,|\alpha_-|^2)^{\frac{1}{2}}$} its norm.
Starting from the expressions in Proposition \ref{prop:firstordergammalarge}, 
we first observe that 
\[
\| \alpha_+\phi_+\,+\alpha_-\phi_-\|_{L^2([0,b])}\,=\, |\alpha|_{\C^2} \left(\int_0^b \frac{dx}{\sqrt{E-V(x)}}\,+\, O(h)\right)^{\frac{1}{2}},
\]
where the $O$ is uniform in $\Vcal\times K$. Indeed, using an integration by parts, the fact that $\gamma \geq 1$ and 
compactness to obtain uniform estimate, we see that the cross-terms give a $O(h)$ contribution.

  Using the triangle inequality then yields 
  \[
    \left(\int_0^b |G(x\,;\,E)|^2\, dx\right)^{\frac{1}{2}}\,=\, |\alpha|_{\C^2}\left(\int_0^b \frac{dx}{\sqrt{E-V(x)}}\,+\,O(h)\right)^{\frac{1}{2}}
\,+\,O(h)
  \]
  in which both $O$ are uniform with respect to $(V,E)\in \Vcal\times K$.
  Since 
  \[ \int_0^b |G(x\,;\,E)|^2\, dx \geq c>0,
  \] 
  and $\int_0^b \frac{dx}{\sqrt{E-V(x)}} \geq c'>0$, both $O$ term
    can be absorbed and we obtain the second line.
    The first line follows using the approximation on $G$ and $hG'$ and the fact that a uniform $O(h)$ term can be absorbed by
    $|\alpha_+|^2\,+\,|\alpha_-|^2$.
  \end{proof}

  Combining the two estimates, and the fact that $\int_0^b |G(x\,;\,E)|^2\, dx$ is uniformly bounded away from $0$, we obtain the proof of Proposition \ref{prop:minZ}.  It remains to address the case $\gamma \in (0,1)$.  

\subsection{When $\gamma \in (0,1)$}  
The problem when $\gamma \in (0,1)$ is that
$y\mapsto y^{\gamma-2}$ is no longer integrable near $0$, so we cannot work directly on $[0,b]$.
It is standard in matching problems that we need to introduce an intermediate point $x_h$ and use different approximations
on $[0,x_h]$ and on $[x_h,b]$. It turns out that we can choose $x_h=h$.

We define the operator $L_h$ as before. Its operator norm  in $\Lcal(C^0([h,b]))$ is bounded above
(uniformly) by
\[
  C\cdot h \cdot \int_h^{b} y^{\gamma-2}\, dy,
\]
so that there exists a uniform $C$ such that 
\[
  \| L_h \| \,\leq\, C\cdot h^\gamma. 
\]

The same proof as above yields the following proposition.

\begin{prop}
  Let $\gamma \in (0,1)$ then there exists a constant $C$ that is uniform with respect to $(V,E)\in \Vcal\times K$
  and $h_0$ such that, for any $h\leq h_0$ there exists $\alpha_\pm$ (that depend on $h$) such that
  \[
    \begin{split}
      \| G_h - \alpha_+\phi_+-\alpha_- \phi_- \|_{C^0([h,b])}\,&\leq \, Ch^\gamma, \\
      \| hG_h' - \alpha_+h\phi_+'-\alpha_- h\phi_-' \|_{C^0([h,b])}\,&\leq \, Ch^\gamma .
    \end{split}
  \]
  
\end{prop}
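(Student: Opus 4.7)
The plan is to run the Volterra bootstrap of Proposition~\ref{prop:firstordergammalarge} on the reduced interval $[h,b]$, which is necessitated by the fact that $r\sim y^{\gamma-2}$ fails to be integrable at $0$ when $\gamma<1$. Keep the same operator
\[
L_h[u](x) = \frac{h}{2i}\int_x^b r(y)\,u(y)\bigl[\phi_-(y)\phi_+(x) - \phi_+(y)\phi_-(x)\bigr]\,dy,
\]
now viewed as a bounded endomorphism of $C^0([h,b])$. The operator-norm bound $\|L_h\|_{\Lcal(C^0([h,b]))} \le C h^\gamma$ recorded just before the proposition follows from $|r(y)| \le C y^{\gamma-2}$, uniform bounds on $\phi_\pm$, and $\int_h^b y^{\gamma-2}\,dy \le h^{\gamma-1}/(1-\gamma)$. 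For $h$ small enough the Neumann series converges and yields $\|(\mathrm{id}+L_h)^{-1} - \mathrm{id}\|_{\Lcal(C^0([h,b]))} = O(h^\gamma)$, so that writing $G_h = (\mathrm{id}+L_h)^{-1}(\alpha_+\phi_+ + \alpha_-\phi_-)$ (with $\alpha_\pm$ determined by $G_h$ and its derivative at $b$) delivers the first $C^0$-estimate on $[h,b]$.

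For the derivative estimate, differentiate the Volterra identity; the boundary contribution at $y=x$ vanishes trivially from $\phi_-(x)\phi_+(x) - \phi_+(x)\phi_-(x)=0$, and absorbing one factor of $h$ into each $\phi_\pm'(x)$ yields
\[
hG_h'(x) - \alpha_+ h\phi_+'(x) - \alpha_- h\phi_-'(x) = -\frac{h}{2i}\int_x^b r(y)\,G_h(y)\bigl[\phi_-(y)\,h\phi_+'(x) - \phi_+(y)\,h\phi_-'(x)\bigr]\,dy.
\]
Using uniform $C^0$-control on $G_h$ (from the first estimate) and on $\phi_\pm$, the right-hand side is bounded by $Ch\int_x^b y^{\gamma-2}\,dy \le C h\cdot x^{\gamma-1} \le Ch^\gamma$ as soon as $x\ge h$, which is the target bound.

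The main point to verify is that $h\phi_\pm'$ is uniformly $O(1)$ on $[h,b]$. Since $\phi_\pm = a\,e^{\pm iS/h}$, one has $h\phi_\pm' = (ha' \pm i\,a\sqrt{E-V})\,e^{\pm iS/h}$; the second summand is bounded thanks to the compactness of $\Vcal\times K$ and the uniform lower bound $E-V\ge \kappa_o$ on $[0,b]$, while $|ha'(x)| \le Ch\cdot x^{\gamma-1} \le Ch^\gamma$ once $x\ge h$. This is precisely why the matching point $x_h=h$ is the correct one: it simultaneously brings $\|L_h\|$ down to $O(h^\gamma)$, keeps $h\phi_\pm'$ bounded, and forces the two announced errors to emerge at the common $h^\gamma$ scale. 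With these ingredients in hand the rest of the argument is a cosmetic reprise of the $\gamma\ge 1$ case.
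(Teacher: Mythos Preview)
Your argument is correct and is exactly the approach the paper takes: the paper's own proof is the single sentence ``The same proof as above yields the following proposition,'' meaning one reruns the Volterra/Neumann-series argument of Proposition~\ref{prop:firstordergammalarge} on $[h,b]$, using the operator-norm bound $\|L_h\|_{\Lcal(C^0([h,b]))}\le Ch^\gamma$ recorded just before the statement. Your write-up in fact supplies the one detail the paper leaves implicit, namely that $h\phi_\pm'$ stays uniformly bounded on $[h,b]$ despite $a'\sim x^{\gamma-1}$ blowing up at the origin, since $h\cdot x^{\gamma-1}\le h^\gamma$ once $x\ge h$.
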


On $[0,h]$, we follow the same strategy but we take as a basis of pseudosolutions the functions $\psi_{\pm}$ defined by
\[
  \psi_{\pm}(x)\,=\, E^{-\frac{1}{4}}e^{\pm\frac{i\sqrt{E}}{h}x}.
\]
This is equivalent to treating the term $V u$ in the equation as some inhomogeneous term.

By following the same method, we obtain the proposition.

\begin{prop}
  Let $\gamma \in (0,1)$ then there exists a constant $C$ that is uniform with respect to $(V,E)\in \Vcal\times K$
  and $h_0$ such that, for any $h\leq h_0$ there exists $\beta_\pm$ (that depend on $h$) such that
  \[
    \begin{split}
      \| G_h - \beta_+\psi_+-\beta_- \psi_- \|_{C^0([0,h])}\,&\leq \, Ch^\gamma, \\
      \| hG_h' - \beta_+\psi_+'-\beta_- h\psi_-' \|_{C^0([h,b])}\,&\leq \, Ch^\gamma .
    \end{split}
  \]
\end{prop}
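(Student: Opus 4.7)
The plan is to mimic Proposition \ref{prop:firstordergammalarge} with two modifications. First, we work on the very short interval $[0,h]$ rather than on $[0,b]$, since the bound $|r(y)| \leq C y^{\gamma-2}$ is no longer integrable at $0$. Second, we replace the WKB basis $\phi_\pm$ by the free plane waves $\psi_\pm$, which are exact solutions of $-h^2 y'' - E y = 0$ and whose semiclassical Wronskian is a nonzero $x$-independent constant. This amounts to rewriting the eigenvalue equation as $-h^2 G'' - EG = -VG$ and treating the whole potential term as an inhomogeneity. Variation of constants then produces coefficients $\beta_\pm$ such that, for all $x \in [0,h]$,
\[
G(x) \,=\, \beta_+ \psi_+(x) + \beta_- \psi_-(x) + T_h[G](x),
\]
with
\[
T_h[u](x) \,\defeq\, \frac{1}{h\sqrt{E}}\int_0^x \sin\!\Big(\tfrac{\sqrt{E}(x-y)}{h}\Big)\, V(y)\, u(y)\, dy.
\]

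The key step is to bound $\|T_h\|_{\Lcal(C^0([0,h]))}$. Using the uniform estimate $V(y) \leq C y^\gamma$ on $[0,b]$ for $V\in \Vcal$ together with the uniform lower bound $E \geq \kappa_o$,
\[
\|T_h[u]\|_{C^0([0,h])} \,\leq\, \frac{C}{\sqrt{\kappa_o}\,h}\int_0^h y^\gamma\, dy \cdot \|u\|_{C^0([0,h])} \,\leq\, C h^\gamma\,\|u\|_{C^0([0,h])}.
\]
For $h$ small enough, $\mathrm{id}-T_h$ is therefore invertible on $C^0([0,h])$ with uniformly bounded inverse, whence
\[
\|G-\beta_+\psi_+-\beta_-\psi_-\|_{C^0([0,h])} \,\leq\, C h^\gamma\,(|\beta_+|+|\beta_-|).
\]
Differentiating the Volterra identity in $x$ and multiplying by $h$ yields
\[
hG'(x) \,=\, \beta_+ h\psi_+'(x) + \beta_- h\psi_-'(x) + \frac{1}{h}\int_0^x \cos\!\Big(\tfrac{\sqrt{E}(x-y)}{h}\Big)\, V(y)\, G(y)\, dy,
\]
and the same estimate on $V$ bounds the remainder by $C h^\gamma \|G\|_{C^0([0,h])}$, which is $O(h^\gamma)$ once $G$ is known to be uniformly bounded in $C^0([0,h])$.

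It remains to show that $|\beta_\pm|$ is uniformly bounded in $(V,E,h) \in \Vcal\times K\times (0,h_0]$. Evaluating the Volterra identity and its derivative at $x=0$ yields $\beta_++\beta_- = E^{1/4}G(0)$ and $\beta_+-\beta_- = -iE^{-1/4}hG'(0)$, so $|\beta_\pm|^2 \leq C|Z_h(0\,;\,E)|^2$. An a priori uniform upper bound on $|Z_h|$ follows from the $L^2$-normalization of $G$ together with the Agmon estimates of Section \ref{sec:agmon}, which give uniform $C^0$ control of $G$ and $hG'$ on $[0,b]$. I expect the main obstacle to be precisely this circular-looking bookkeeping: ensuring that the upper bound on $|Z_h|$ used here is genuinely a priori and does not depend on the lower bound we ultimately want to prove. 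A cleaner alternative is to define $\beta_\pm$ through matching at $x=h$ with the coefficients $\alpha_\pm$ from the preceding proposition on $[h,b]$, via the change-of-basis matrix from $(\phi_\pm(h),h\phi_\pm'(h))$ to $(\psi_\pm(h),h\psi_\pm'(h))$, whose entries are uniformly bounded; since $|\alpha_\pm|$ are uniformly bounded by an $L^2$-normalization argument analogous to the corollary following Proposition \ref{prop:firstordergammalarge}, so are $|\beta_\pm|$, closing the loop.
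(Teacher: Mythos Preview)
Your proposal is correct and follows essentially the same route as the paper: treat $Vu$ as the inhomogeneity, apply variation of constants with the free basis $\psi_\pm$, and bound the resulting Volterra operator on $C^0([0,h])$ by $Ch^\gamma$ via $\int_0^h y^\gamma\,dy = O(h^{\gamma+1})$. The paper simply writes ``by following the same method, we obtain the proposition'' --- your computation is exactly what is meant.

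One small correction: your first route to bounding $|\beta_\pm|$ is not quite available as stated. The Agmon estimates of Section~\ref{sec:agmon} control $G$ and $hG'$ only in the classically forbidden region $[c,\infty)$; they do not directly yield uniform $C^0$ bounds on $[0,b]$, so you cannot read off an a~priori upper bound on $|Z_h(0)|$ from them alone. Your second route --- matching at $x=h$ with the coefficients $\alpha_\pm$ of the preceding proposition on $[h,b]$, whose boundedness comes from the $L^2$ normalization on $[h,b]$ --- is the correct resolution, and it is precisely what the paper does in the paragraph following the two propositions (showing $\alpha_\pm=\beta_\pm+O(h^\gamma)$ via the change-of-basis at $x=h$). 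So drop the Agmon remark and keep the matching argument.
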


Using the former proposition we obtain
\[
  \left\{
    \begin{array}{ccc}
      G(h)&=& \alpha_+ (\phi_+(h)\,+\,O(h^\gamma))\,+\,\alpha_-(\phi_-(h)\,+\,O(h^\gamma))\,+\,O(h^\gamma), \\
      hG'(h)&=& \alpha_+ (h\phi_+'(h)\,+\,O(h^\gamma))\,+\,\alpha_-(h\phi_-'(h)\,+\,O(h^\gamma))\,+\,O(h^\gamma)\\
    \end{array}
  \right .
\]
and using the latter proposition, we obtain
\[
  \left\{
    \begin{array}{ccc}
      G(h)&=& \alpha_+ \psi_+(h)\,+\,\alpha_-\psi_-(h)\,+\,O(h^\gamma) , \\
      hG'(h)&=& \alpha_+ h\psi_+'(h)\,+\,\alpha_- h\psi_-'(h)\,+\,O(h^\gamma).\\
    \end{array}
  \right .
\]
We now observe that
\[
  S(h)\,=\, h(\sqrt{E}\,+\,O(h^\gamma)),~~a(h)\,=\,E^{-\frac{1}{4}}\,+\,O(h^\gamma),~~ha'(h)\,=\, O(h^\gamma) 
\]
so that $\phi_{\pm}(h)\,=\,\psi_{\pm}(h)\,+O(h^\gamma)$ and $h\phi'_{\pm}(h)\,=\,h\psi_{\pm}'(h)+O(h^\gamma)$.
We compute 
\[
  \left |
    \begin{array}{cc}
      \psi_+(h) & \psi_-(h) \\
      h\psi_+'(h) & h\psi_-(h)\\
    \end{array}
  \right |
  \,=\, -2i.
\]
Since this determinant is uniformly bounded away from $0$ and the coefficients of the corresponding matrix
are uniformly bounded above, we deduce that
\[
  \alpha_\pm\,=\,\beta_\pm \,+\,O(h^\gamma).
\]

We now estimate the norms over $[0,h]$ and $[h,b]$:
\[
  \begin{split}
     \| G(x\,;\,E)\|_{L^2([0,h])}&=\, |\beta|_{\C^2}\left[\frac{h}{\sqrt{E}}(1+O(h^{\gamma}))\right]^{\frac{1}{2}}\,+\,O(h^{\gamma+ \frac{1}{2}}), \\
    \|G_h(x\,;\,E)\|_{L^2([h,b])} &=\, |\alpha|_{\C^2}\left[\int_{h}^b \frac{dx}{\sqrt{E-V(x)}}\,+\,O(h)\right]^{\frac{1}{2}}\,+\,O(h^\gamma). 
  \end{split}
\]
Adding these two equalities, and using the fact that $\alpha_\pm\,=\, \beta_\pm\,+\,O(h^\gamma)$ and that
$\int_0^b |G_h(x\,;\,E)|^2 dx$ is uniformly bounded away from $0$, we obtain that
\[
  \begin{split}
    \int_0^b |G_h(x\,;\,E)|^2 dx &= \big(|\alpha_+|^2\,+\,|\alpha_-|^2\big)\int_{0}^b \frac{dx}{\sqrt{E-V(x)}}\,+\,O(h^\gamma)\\
    &= \big(|\beta_+|^2\,+\,|\beta_-|^2\big)\int_{0}^b \frac{dx}{\sqrt{E-V(x)}}\,+\,O(h^\gamma).
  \end{split}
\]

Remarking that
\[
  |Z_h|^2 \asymp \big(|\beta_+|^2\,+\,|\beta_-|^2\big)
\]
completes the proof of Proposition \ref{prop:minZ}.

\section{Proof of Theorem \ref{thm:main}}
To prove part $(1)$, let $M < \liminf_{x\rightarrow +\infty} V(x)$, then 
the part of the spectrum of $P_h$ below $M$ is discrete as follows from the fact that the set 
\[
  \big \{ u\in H^1,~~\|u\|_{L^2}\leq 1,~~
  h^2\int_0^{+\infty} |u'(x)|^2 \, dx \,+\,\int_0^{+\infty} V(x) |u(x)|^2\, dx \, \leq M \| u\|^2_{L^2} \big \}
\]
is relatively compact in $L^2$. 

In order to prove part $(2)$, we argue by contradiction. If the estimate is not true then we can find two distincts eigenvalues
$E_h$ and $\tilde{E_h}$ such that 
\[
  d_h(E) \,=o(h\cdot E^{\frac{\gamma-2}{2\gamma}}).
\]
We may suppose that $E_h$ has a limit $E_0$ and we have three cases to study.
\begin{itemize}
\item $E_0=0$ and there exists $M$ such that $E_h\leq Mh^{\frac{2\gamma}{\gamma +2}}$. We obtain a contradiction using the estimate
  in the bottom of the well regime.
\item $E_0=0$ and $h^{-\frac{2\gamma}{\gamma+2}}E_h \rightarrow +\infty$. We make the energy-dependent scaling and obtain a contradiction
  with the estimate in the intermediate regime.
  \item $E_0>0$. We obtain a contradiction with the non-critical energy regime.    
\end{itemize}

\bibliographystyle{alpha}
\bibliography{MMT-bib1}

\def\cprime{$'$} \def\cftil#1{\ifmmode\setbox7\hbox{$\accent"5E#1$}\else
  \setbox7\hbox{\accent"5E#1}\penalty 10000\relax\fi\raise 1\ht7
  \hbox{\lower1.15ex\hbox to 1\wd7{\hss\accent"7E\hss}}\penalty 10000
  \hskip-1\wd7\penalty 10000\box7}
\begin{thebibliography}{RS78b}

\bibitem[Ber82]{Berry82}
Michael~V Berry.
\newblock Semiclassically weak reflections above analytic and non-analytic
  potential barriers.
\newblock {\em Journal of Physics A: Mathematical and General}, 15(12):3693,
  1982.

\bibitem[BP19]{Bony_Popoff19}
Jean-Francois Bony and Nicolas Popoff.
\newblock Low-lying eigenvalues of semiclassical schr{\"o}dinger operator with
  degenerate wells.
\newblock {\em Asymptotic Analysis}, 112(1-2):23--36, 2019.

\bibitem[BS12]{BirSol87}
Michael~Sh Birman and Michael~Z Solomjak.
\newblock {\em Spectral theory of self-adjoint operators in Hilbert space},
  volume~5.
\newblock Springer Science \& Business Media, 2012.

\bibitem[Chr15]{Chr_AIF_15}
Hans Christianson.
\newblock Unique continuation for quasimodes on surfaces of revolution:
  Rotationally invariant neighbourhoods.
\newblock In {\em Annales de l'Institut Fourier}, volume~65, pages 1617--1645,
  2015.

\bibitem[DS99]{DiSj}
Mouez Dimassi and Johannes Sj{\"o}strand.
\newblock {\em Spectral asymptotics in the semi-classical limit}, volume 268 of
  {\em London Mathematical Society Lecture Note Series}.
\newblock Cambridge University Press, Cambridge, 1999.

\bibitem[dV05]{Cdv_BS05}
Y~Colin de~Verdiere.
\newblock Bohr-sommerfeld rules to all orders.
\newblock {\em Ann. Henri Poincar{\'e}}, 6(5):925--936, 2005.

\bibitem[FS09]{friedlander2009spectrum}
Leonid Friedlander and Michael Solomyak.
\newblock On the spectrum of the dirichlet laplacian in a narrow strip.
\newblock {\em Israel journal of mathematics}, 170(1):337--354, 2009.

\bibitem[GW18]{GW}
Oran Gannot and Jared Wunsch.
\newblock Semiclassical diffraction by conormal potential singularities.
\newblock Preprint, arXiv:1806.01813, 2018.

\bibitem[GZ06]{gesztesy2006spectral}
Fritz Gesztesy and Maxim Zinchenko.
\newblock On spectral theory for schr{\"o}dinger operators with strongly
  singular potentials.
\newblock {\em Mathematische Nachrichten}, 279(9-10):1041--1082, 2006.

\bibitem[Hil18]{Hil_Kai}
Luc Hillairet.
\newblock Two applications of {D}irichlet-{N}eumann bracketing.
\newblock In {\em Spectral theory of graphs and of manifolds, {CIMPA} 2016,
  {K}airouan, {T}unisia}, volume~32 of {\em S\'{e}min. Congr.}, pages 249--261.
  Soc. Math. France, Paris, 2018.

\bibitem[HM12]{hillairet2012nonconcentration}
Luc Hillairet and Jeremy Marzuola.
\newblock Nonconcentration in partially rectangular billiards.
\newblock {\em Analysis \& PDE}, 5(4):831--854, 2012.

\bibitem[H{\"o}r80]{Hormander3}
Lars H{\"o}rmander.
\newblock {\em The Analysis of Linear Partial Differential Operators III}.
\newblock Springer, 1980.

\bibitem[H{\"o}r03]{Hor-v1}
Lars H{\"o}rmander.
\newblock {\em The analysis of linear partial differential operators. {I}}.
\newblock Classics in Mathematics. Springer-Verlag, Berlin, 2003.
\newblock Distribution theory and Fourier analysis, Reprint of the second
  (1990) edition [Springer, Berlin; MR1065993 (91m:35001a)].

\bibitem[H{\"o}r05]{Hor-v2}
Lars H{\"o}rmander.
\newblock {\em The analysis of linear partial differential operators. {II}}.
\newblock Classics in Mathematics. Springer-Verlag, Berlin, 2005.
\newblock Differential operators with constant coefficients, Reprint of the
  1983 original.

\bibitem[H{\"o}r09]{Hormander4}
Lars H{\"o}rmander.
\newblock {\em The analysis of linear partial differential operators. {IV}}.
\newblock Classics in Mathematics. Springer-Verlag, Berlin, 2009.
\newblock Fourier integral operators, Reprint of the 1994 edition.

\bibitem[HR83]{helffer1983calcul}
Bernard Helffer and Didier Robert.
\newblock Calcul fonctionnel par la transformation de mellin et op{\'e}rateurs
  admissibles.
\newblock {\em Journal of functional analysis}, 53(3):246--268, 1983.

\bibitem[HS84]{HeSjo_mwI84}
Bernard Helffer and Johannes Sjostrand.
\newblock Multiple wells in the semi-classical limit i.
\newblock {\em Communications in Partial Differential Equations},
  9(4):337--408, 1984.

\bibitem[LR79]{LaiRobert79}
Pham~The {Lai} and D.~{Robert}.
\newblock {Valeurs propres d'une classe d'\'equations diff\'erentielles
  singulieres sur une demi-droite}.
\newblock {\em {Ann. Sc. Norm. Super. Pisa, Cl. Sci., IV. Ser.}}, 6:335--366,
  1979.

\bibitem[MA72]{Maslov72}
Viktor~Pavlovich Maslov and Vladimir~I Arnol'd.
\newblock {\em Th{\'e}orie des perturbations et m{\'e}thodes asymptotiques}.
\newblock Dunod, 1972.

\bibitem[MR88]{Mart_Rou88}
Andr\'{e} Martinez and Michel Rouleux.
\newblock Effet tunnel entre puits d\'{e}g\'{e}n\'{e}r\'{e}s.
\newblock {\em Comm. Partial Differential Equations}, 13(9):1157--1187, 1988.

\bibitem[OB78]{Bender_Orszag78}
S~Orszag and Carl~M Bender.
\newblock {\em Advanced mathematical methods for scientists and engineers}.
\newblock McGraw-Hill New York, NY, USA, 1978.

\bibitem[Olv74]{olver}
Frank W.~J. Olver.
\newblock {\em Asymptotics and Special Functions}.
\newblock Academic Press, New York-London, 1974.

\bibitem[RS78a]{reed1978methods}
M~Reed and B~Simon.
\newblock Methods in mathematical physics, vol. iv: Analysis of operators,
  1978.

\bibitem[RS78b]{RSv4}
M.~Reed and B.~Simon.
\newblock {\em Methods of Modern Mathematical Physics IV. Analysis of
  Operators}.
\newblock Academic Press, 1978.

\bibitem[Sim83]{Simon_lowlying_83}
Barry Simon.
\newblock Semiclassical analysis of low lying eigenvalues. i. non-degenerate
  minima: Asymptotic expansions.
\newblock In {\em Annales de l'IHP Physique th{\'e}orique}, volume~38, pages
  295--308, 1983.

\bibitem[Sj{\"o}92]{Sjostrand_semi92}
Johannes Sj{\"o}strand.
\newblock Semi-excited states in nondegenerate potential wells.
\newblock {\em Asymptotic analysis}, 6(1):29--43, 1992.

\bibitem[Tes09]{teschl2009mathematical}
Gerald Teschl.
\newblock Mathematical methods in quantum mechanics.
\newblock {\em Graduate Studies in Mathematics}, 99, 2009.

\bibitem[Tit46]{titchmarsh1946eigenfunction}
EC~Titchmarsh.
\newblock Eigenfunction expansions associated with second-order differential
  equations, vol. 1. clarendon, 1946.

\bibitem[Yaf11]{Yafaev11}
Dimitri Yafaev.
\newblock The semiclassical limit of eigenfunctions of the schr{\"o}dinger
  equation and the bohr--sommerfeld quantization condition, revisited.
\newblock {\em St. Petersburg Mathematical Journal}, 22(6):1051--1067, 2011.

\bibitem[Zwo12]{zworski2012semiclassical}
Maciej Zworski.
\newblock {\em Semiclassical analysis}, volume 138.
\newblock American Mathematical Soc., 2012.

\end{thebibliography}

\end{document}